\definecolor{webgreen}{rgb}{0,.5,0}
\definecolor{webbrown}{rgb}{.6,0,0}
\newtheorem{theorem}{Theorem}[section]
\newtheorem{notation}[theorem]{Notation}
\newtheorem{lemma}[theorem]{Lemma}
\theoremstyle{definition}
\newtheorem{definition}{Definition}[section]
\theoremstyle{remark}
\newtheorem{remark}{Remark}
\newcommand{\seqnum}[1]{\href{http://oeis.org/#1}{\underline{#1}}}
\newcommand{\CK}{\mathit{CK}}
\newcommand{\go}{\protect\includegraphics[height=1.75cm,valign=c]{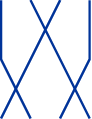}}
\newcommand{\gi}{\protect\includegraphics[height=1.75cm,valign=c]{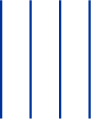}}
\newcommand{\gii}{\protect\includegraphics[height=1.75cm,valign=c]{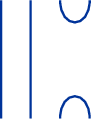}}
\newcommand{\giii}{\protect\includegraphics[height=1.75cm,valign=c]{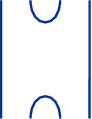}}
\newcommand{\giv}{\protect\includegraphics[height=1.75cm,valign=c]{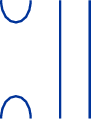}}
\newcommand{\gv}{\protect\includegraphics[height=1.75cm,valign=c]{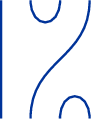}}
\newcommand{\gvi}{\protect\includegraphics[height=1.75cm,valign=c]{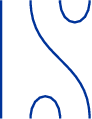}}
\newcommand{\gvii}{\protect\includegraphics[height=1.75cm,valign=c]{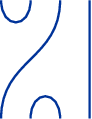}}
\newcommand{\gviii}{\protect\includegraphics[height=1.75cm,valign=c]{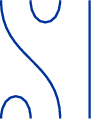}}
\newcommand{\gix}{\protect\includegraphics[height=1.75cm,valign=c]{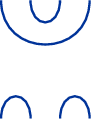}}
\newcommand{\gx}{\protect\includegraphics[height=1.75cm,valign=c]{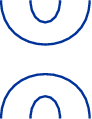}}
\newcommand{\gxi}{\protect\includegraphics[height=1.75cm,valign=c]{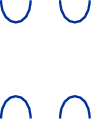}}
\newcommand{\gxii}{\protect\includegraphics[height=1.75cm,valign=c]{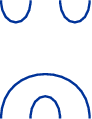}}
\newcommand{\gxiii}{\protect\includegraphics[height=1.75cm,valign=c]{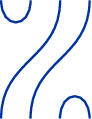}}
\newcommand{\gxiv}{\protect\includegraphics[height=1.75cm,valign=c]{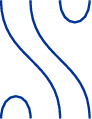}}
\newcommand{\gicI}{\protect\includegraphics[height=2.5cm,valign=c]{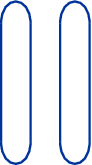}}
\newcommand{\giicI}{\protect\includegraphics[height=2.5cm,valign=c]{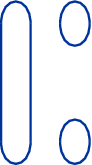}}
\newcommand{\giiicI}{\protect\includegraphics[height=2.5cm,valign=c]{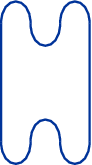}}
\newcommand{\givcI}{\protect\includegraphics[height=2.5cm,valign=c]{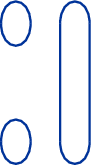}}
\newcommand{\gvcI}{\protect\includegraphics[height=2.5cm,valign=c]{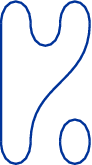}}
\newcommand{\gvicI}{\protect\includegraphics[height=2.5cm,valign=c]{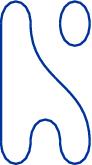}}
\newcommand{\gviicI}{\protect\includegraphics[height=2.5cm,valign=c]{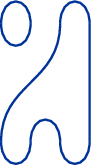}}
\newcommand{\gviiicI}{\protect\includegraphics[height=2.5cm,valign=c]{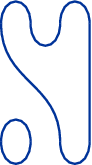}}
\newcommand{\gixcI}{\protect\includegraphics[height=2.5cm,valign=c]{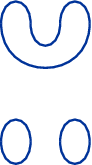}}
\newcommand{\gxcI}{\protect\includegraphics[height=2.5cm,valign=c]{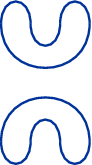}}
\newcommand{\gxicI}{\protect\includegraphics[height=2.5cm,valign=c]{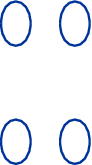}}
\newcommand{\gxiicI}{\protect\includegraphics[height=2.5cm,valign=c]{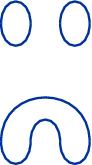}}
\newcommand{\gxiiicI}{\protect\includegraphics[height=2.5cm,valign=c]{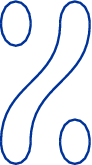}}
\newcommand{\gxivcI}{\protect\includegraphics[height=2.5cm,valign=c]{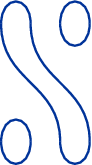}}
\numberwithin{equation}{section}
\def\namedlabel#1#2{\begingroup
	#2%
	\def\@currentlabel{#2}%
	\phantomsection\label{#1}\endgroup
}
\title{\bf The bracket polynomial of the   Celtic link shadow $\bm{\CK_4^{2n}}$}
\author{Franck Ramaharo\\
	\small Mention Mathématiques et Informatique\\[-0.8ex]
	\small Université d'Antananarivo\\[-0.8ex] 
	\small 101 Antananarivo, Madagascar\\
	\small\href{mailto:franck.ramaharo@gmail.com}{\tt franck.ramaharo@gmail.com}\\
}
\date{\small August 14, 2025\\}
\begin{document}
	\maketitle

	\begin{abstract}
		We derive the Kauffman bracket polynomial for the shadow of the Celtic link $\CK_4^{2n}$ using two complementary approaches. The first approach uses a recursive relation within the Celtic framework of Gross and Tucker, based on diagrammatic identities. The second approach makes use of a 4-tangle algebraic framework: a fundamental tangle is concatenated with itself $n$ times to form an iterated composite tangle, and the Kauffman bracket polynomial is computed by decomposing the state space with respect to the basis elements of the 4-strand diagram monoid.

		\bigskip\noindent  {Keywords:} celtic knot, 4-tangle, knot shadow, Kauffman state.
	\end{abstract}
	
	\section{Introduction}
	
	The aim of this paper is to compute the Kauffman bracket polynomial of the Celtic link shadow $\CK_4^{2n}$. An example of such a shadow diagram is shown in Figure~\ref{fig:CK420}. 
	
	\begin{figure}[H]
		\centering
		\centering
		\includegraphics[height=1.5cm]{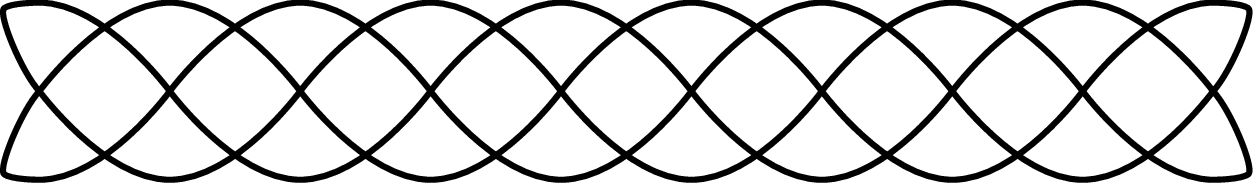}
		\caption{Celtic knot shadow $\CK^{20}_4$}
		\label{fig:CK420}
	\end{figure}
	
	In this paper, we work exclusively with the shadow diagram \cite[p.~11--12]{Kauffman1993}, that is the 4-regular plane graph underlying the link projection with no information about over- or under-crossings. For simplicity, the term ``knot'' will be used throughout this paper to refer to either a knot or a link, and specifically to its shadow diagram. Moreover, all diagrams are considered up to isotopy on the sphere $\mathcal{S}^2$. 
	
	We present two complementary approaches to compute the Kauffman bracket polynomial of the Celtic knot $\CK_4^{2n}$. The first approach uses a recursive relation within the Celtic framework of Gross and Tucker \cite{GrossTucker2011} by applying diagrammatic identities.  Since we work with the shadow diagram, the recursive relation simplifies significantly and allows us to derive an explicit formula for $\left<\CK_4^{2n}\right>$.
	
	The second approach makes use of a 4-tangle algebraic framework. A fundamental tangle is concatenated with itself $n$ times to form an iterated composite tangle. The Kauffman bracket polynomial is then computed via a recursive process. The strategy consists of decomposing the states of the fundamental tangle with respect to a basis of 14 elements from the 4-strand diagram monoid $\mathcal{K}_4$ \cite{KitovNikov2020}. 
	
	The remainder of the paper is organized as follows. In Section~\ref{sec:bracket}, we recall the definition of the bracket polynomial for a knot shadow. In Section~\ref{sec:celticframework}, we compute the bracket polynomial of the Celtic knot $\CK_4^{2n}$  within the Celtic framework. Finally, in Section~\ref{sec:tangle framework}, we compute the bracket polynomial of $\CK_4^{2n}$  using the 4-tangle algebraic framework.

	\section{Kauffman bracket polynomials}\label{sec:bracket}
	
	The Kauffman bracket polynomial for a knot assigns to each shadow diagram $ D $ an element $ \langle D \rangle \in \mathbb{Z}[x] $. It is defined recursively by the following three axioms:
	
	\begin{itemize}
		\item [\namedlabel{itm:K1}{$ (\mathbf{K1}) $}:] $  \left<\bigcirc \right>=x $;
		\item [\namedlabel{itm:K2}{$ (\mathbf{K2}) $}:]  $ \left<\bigcirc\sqcup D'\right>=x\left< D'\right>$;
		\item [\namedlabel{itm:K3}{$ (\mathbf{K3}) $}:]  $\left<\protect\includegraphics[width=.0225\linewidth,valign=c]{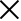}\right>=\left<\protect\includegraphics[width=.0225\linewidth,valign=c]{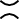}\right>+\left<\protect\includegraphics[width=.0225\linewidth,valign=c]{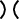}\right>$.
	\end{itemize}
	
	Here, $ \bigcirc $ denotes a simple closed curves or a circle, $ \sqcup $ denotes disjoint union, and the local diagrams in \ref{itm:K3} represent the two possible smoothings of a crossing. The latter are called the states of that crossing. A state of the diagram refers to the complete configuration obtained by choosing a smoothing at every crossing, resulting in a collection of disjoint circles. 
	
	In this setting, the bracket polynomial has a purely combinatorial interpretation. Specifically, $ \langle D \rangle $ is a state-sum over all possible ways of smoothing every crossing in $ D $:	\[
	\langle D \rangle = \sum_{S} x^{|S|},
	\] 	where the sum is taken over all Kauffman states $ S $ of $ D $, and $ |S| $ denotes the number of disjoint circles in the state $ S $. Thus, the coefficient $ \left[x^k\right] \langle D \rangle $ represents the number of states in which the resolved diagram consists of exactly $ k $ circles.

	\section{Within the Celtic framework}\label{sec:celticframework}
	
	Following the Celtic framework of Gross and Tucker \cite{GrossTucker2011}, the knot $\CK_4^{2n}$ is the barrier-free Celtic knot constructed on a $4\times 2n$  rectangular grid of squares, using the standard Celtic design rules~\cite{Bain1992,Sloss2002}.  The Celtic knots $\CK_4^{2}$, $\CK_4^{4}$, $\CK_4^{6}$ and $\CK_4^{2n}$ are shown in Figure~\ref{fig:Celticdefinition}.
	\begin{figure}[H]
		\centering
		\begin{subfigure}[t]{0.2\textwidth}
			\centering
			\includegraphics[height=3.05cm]{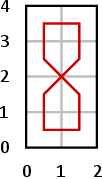}
			\caption{$\CK_4^{2}$}
		\end{subfigure}%
		\hfill 
		\begin{subfigure}[t]{0.2\textwidth}
			\centering
			\includegraphics[height=3.05cm]{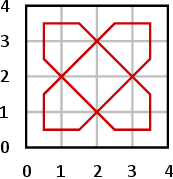}
			\caption{$\CK_4^{4}$}
		\end{subfigure}%
		\hfill	
		\begin{subfigure}[t]{0.3\textwidth}
			\centering
			\includegraphics[height=3.05cm]{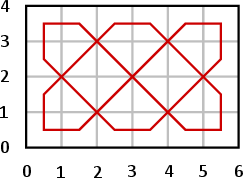}
			\caption{$\CK_4^{6}$}
		\end{subfigure}%
		\hfill 
		\begin{subfigure}[t]{0.3\textwidth}
			\centering
			\includegraphics[height=3.05cm]{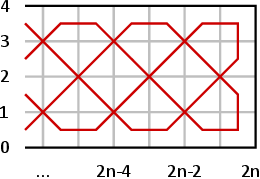}
			\caption{$\CK_4^{2n}$}
		\end{subfigure}%
		\caption{Celtic knot diagrams}
		\label{fig:Celticdefinition}
	\end{figure}
	
	\begin{remark}
		In the Celtic framework, axiom \ref{itm:K3} can be written as 
		\begin{equation}\label{eq:K3p}
			\left<\CK^{2n}_4\right> = \left<H_i^j\CK^{2n}_4\right>+ \left<V_i^j\CK^{2n}_4\right>,
		\end{equation} where the notation $H_i^j$ means replace the crossing of a Celtic knot at row $i$, column $j$ by a horizontal pair. The notation $V_i^j$ means replace the crossing at row $i$, column $j$ by a vertical pair \cite{GrossTucker2011}. See example in Figure~\ref{fig:bracketoperation}.
		
		\begin{figure}[!h]
			\centering
			\begin{tikzpicture}
				\node at (0,0) {\includegraphics[width=0.58\linewidth]{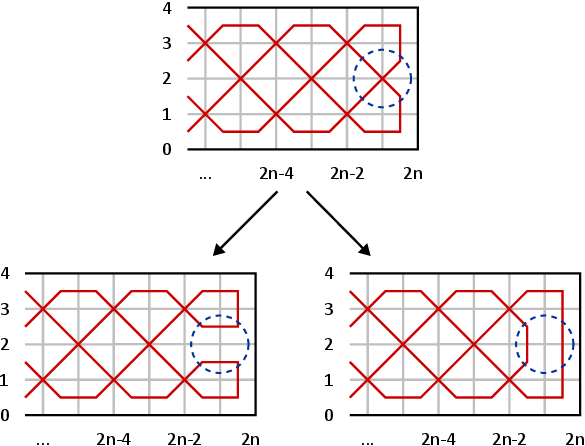}};
				\node at (0,4) {$\CK^{2n}_4$};
				\node at (-2.5,-0.5) {$H_2^{2n-1}\CK^{2n}_4$};
				\node at (2.5,-0.5) {$V_2^{2n-1}\CK^{2n}_4$};
			\end{tikzpicture}
			\caption{Illustration of $\left<\CK^{2n}_4\right> = \left<H_2^{2n-1}\CK^{2n}_4\right>+ \left<V_2^{2n-1}\CK^{2n}_4\right>$}
			\label{fig:bracketoperation}
		\end{figure}
	\end{remark}
	
	Having introduced the necessary notation, we derive the following Lemma and Theorem from the recursive relations for the Kauffman bracket polynomial given by Gross and Tucker~\cite{GrossTucker2011}.
	
	\begin{lemma}
		The following four relations hold for bracket polynomials:
		\begin{align}
			\left<H^{2n-1}_2\CK_4^{2n}\right>&=(x+1)^2\left<\CK_4^{2n-2}\right>; \label{eq:R1}\\
			\left<V_1^{2n-2}V_2^{2n-1}\CK_4^{2n}\right>&=(x+1)\left<\CK_4^{2n-2}\right>; \label{eq:R2}\\
			\left<H_3^{2n-2}H_1^{2n-2}V_2^{2n-1}\CK_4^{2n}\right>&=(x+1)\left<V_2^{2n-3}\CK_4^{2n-2}\right>;\label{eq:R3}\\
			\left<V_3^{2n-2}H_1^{2n-2}V_2^{2n-1}\CK_4^{2n}\right>&=\left<\CK_4^{2n-2}\right>. \label{eq:R4}
		\end{align}
	\end{lemma}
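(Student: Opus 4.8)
The plan is to prove all four relations by a single diagrammatic mechanism: apply the indicated smoothing operators to the shadow of $\CK_4^{2n}$, simplify the resulting $4$-regular plane graph by isotopy on $\mathcal{S}^2$, and read off the contribution of the affected region in the last three columns. Throughout, only two elementary reductions are needed. First, a component that detaches as a free simple closed curve contributes a factor of $x$ by axiom~\ref{itm:K2}. Second, a \emph{curl} --- a single crossing at which a strand meets itself and bounds a small disk --- contributes a factor of $(x+1)$: by axiom~\ref{itm:K3} its two smoothings yield the same underlying diagram, one carrying an extra disjoint circle and the other not, so that $\langle\,\text{curl}\,\rangle = (x+1)\langle\,\text{no curl}\,\rangle$. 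Every $(x+1)$ factor in the statement will arise in this way, and every power of $x$ will arise from a detached circle.

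First I would treat \eqref{eq:R1}. Applying $H_2^{2n-1}$ opens the crossing at row $2$, column $2n-1$ horizontally; tracing the strands emanating from the boundary of the last two columns shows that the corner region closes into two independent curls, while the remainder of the diagram is, after a spherical isotopy sliding the emptied columns away, exactly the shadow $\CK_4^{2n-2}$. Removing the two curls via the second reduction above yields the factor $(x+1)^2$, giving \eqref{eq:R1}.

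Relations \eqref{eq:R2}, \eqref{eq:R3} and \eqref{eq:R4} all begin instead from the vertical smoothing $V_2^{2n-1}$, i.e.\ the complementary state of the same crossing resolved in \eqref{eq:R1} (cf.\ Figure~\ref{fig:bracketoperation}). After this first smoothing I would resolve the remaining indicated crossings and again trace connectivity. In \eqref{eq:R2} the pair $V_1^{2n-2}V_2^{2n-1}$ leaves a single curl together with a clean copy of $\CK_4^{2n-2}$, producing the lone factor $(x+1)$. In \eqref{eq:R4} the triple $V_3^{2n-2}H_1^{2n-2}V_2^{2n-1}$ reconnects the strands so that neither a free circle nor a curl survives, and the residual diagram is precisely $\CK_4^{2n-2}$, giving the bare $\langle\CK_4^{2n-2}\rangle$. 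Relation \eqref{eq:R3} is the subtler one: here $H_3^{2n-2}H_1^{2n-2}V_2^{2n-1}$ produces one curl, hence one factor $(x+1)$, but the inward strand does not fully close; instead it inherits a vertical smoothing one column deeper, which is why the residual is $\langle V_2^{2n-3}\CK_4^{2n-2}\rangle$ rather than $\langle\CK_4^{2n-2}\rangle$.

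The main obstacle I anticipate is the faithful bookkeeping of strand connectivity along the right-hand grid boundary, where the barrier-free Celtic design rules force strands to turn back on themselves; it is exactly this turn-back behaviour that distinguishes a detached circle from a curl, and that determines whether a smoothing propagates one column inward (as in \eqref{eq:R3}) or terminates locally (as in \eqref{eq:R2} and \eqref{eq:R4}). Making these identifications rigorous amounts to checking the local pictures in the three rightmost columns against the standard Celtic generators, and then invoking isotopy on $\mathcal{S}^2$ to transport the untouched part of the diagram onto $\CK_4^{2n-2}$.
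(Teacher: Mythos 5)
Your proposal is correct and takes essentially the same route as the paper, whose entire proof consists of exhibiting the four smoothed diagrams in Figure~\ref{subfig:bracketHCK}--\subref{subfig:bracketVHVCK} and reading off exactly the reductions you describe: each curl contributes a factor $(x+1)$, each detached circle a factor $x$, and isotopy on $\mathcal{S}^2$ carries the untouched part onto $\CK_4^{2n-2}$. Your bookkeeping is consistent throughout (in particular, in \eqref{eq:R3} the surviving curl is the crossing at row $2$, column $2n-3$, whose straightening realizes precisely the smoothing $V_2^{2n-3}$, which yields the residual $\left<V_2^{2n-3}\CK_4^{2n-2}\right>$), so your argument is at least as explicit as the paper's figure-based verification.
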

	
	\begin{proof}
		Formulas \eqref{eq:R1}--\eqref{eq:R4} follow directly from the diagrammatic relations depicted in Figure~\ref{subfig:bracketHCK}--\subref{subfig:bracketVHVCK}, respectively.
		
		\begin{figure}[!h]
			\centering			
			\begin{subfigure}[t]{0.4\textwidth}
				\centering
				\includegraphics[height=3.05cm]{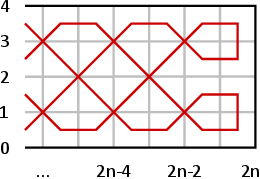}
				\caption{$H^{2n-1}_2\CK_4^{2n}$}
				\label{subfig:bracketHCK}
			\end{subfigure}%
			~
			\begin{subfigure}[t]{0.4\textwidth}
				\centering
				\includegraphics[height=3.05cm]{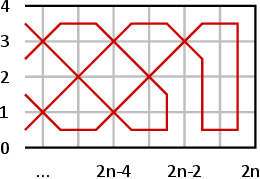}
				\caption{$V_1^{2n-2}V_2^{2n-1}\CK_4^{2n}$}
			\end{subfigure}
			
			\begin{subfigure}[t]{0.4\textwidth}
				\centering
				\includegraphics[height=3.05cm]{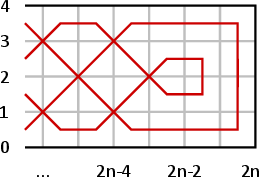}
				\caption{$H_3^{2n-2}H_1^{2n-2}V_2^{2n-1}\CK_4^{2n}$}
			\end{subfigure}%
			~
			\begin{subfigure}[t]{0.4\textwidth}
				\centering
				\includegraphics[height=3.05cm]{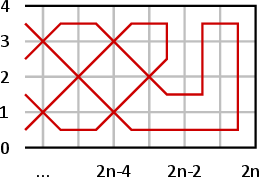}
				\caption{$V_3^{2n-2}H_1^{2n-2}V_2^{2n-1}\CK_4^{2n}$}
				\label{subfig:bracketVHVCK}
			\end{subfigure}
			\caption{Celtic shadow diagrams for bracket polynomial relations}
		\end{figure}	
	\end{proof}
	
	\begin{lemma}
		The bracket polynomial for the sequence $\left(\CK^{2n}_4\right)_n$ is given by the following recursion
		\begin{align}
			\left<\CK^2_4\right>&=x^2+x;\label{eq:Rec2}\\
			\left<V_2^1\CK^2_4\right>&=x;\label{eq:Rec3}\\
			\left<\CK^{2n}_4\right>&=(x+1)^2\left<\CK_4^{2n-2}\right>+\left<V_2^{2n-1}\CK_4^{2n}\right>\quad \textit{for $n\geq 2$}\label{eq:Rec4};\\
			\left<V_2^{2n-1}\CK^{2n}_4\right>&=(x+1)\left<V_2^{2n-3}\CK_4^{2n-2}\right>+(x+2)\left<\CK^{2n-2}_4\right>\quad \textit{for $n\geq 2$}.\label{eq:Rec5}
		\end{align}
	\end{lemma}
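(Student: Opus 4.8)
The plan is to dispatch the two initial identities \eqref{eq:Rec2}--\eqref{eq:Rec3} by direct state enumeration on the smallest grid, and then to obtain the two recursions \eqref{eq:Rec4}--\eqref{eq:Rec5} by repeatedly invoking the smoothing rule \eqref{eq:K3p} and substituting the four evaluations supplied by the previous Lemma. No new geometric input is needed beyond those four relations; the work is to fix the order in which crossings are smoothed so that each intermediate diagram matches the left-hand side of \eqref{eq:R1}, \eqref{eq:R2}, \eqref{eq:R3}, or \eqref{eq:R4}.

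For the base cases, I would expand $\CK_4^2$, the diagram on the $4\times 2$ grid, by resolving each crossing through \ref{itm:K3}, listing the resulting collections of disjoint circles, and reading off the state sum via \ref{itm:K1}--\ref{itm:K2}; this yields $\left<\CK_4^2\right>=x^2+x$. The companion value $\left<V_2^1\CK_4^2\right>=x$ is computed the same way, starting from the diagram already smoothed vertically at row $2$, column $1$.

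For \eqref{eq:Rec4}, I would apply \eqref{eq:K3p} at the crossing in row $2$, column $2n-1$, giving $\left<\CK_4^{2n}\right>=\left<H_2^{2n-1}\CK_4^{2n}\right>+\left<V_2^{2n-1}\CK_4^{2n}\right>$, and then replace the first term using \eqref{eq:R1}. For \eqref{eq:Rec5}, I would begin from $V_2^{2n-1}\CK_4^{2n}$ and smooth in two further stages. First I apply \eqref{eq:K3p} at row $1$, column $2n-2$, splitting $\left<V_2^{2n-1}\CK_4^{2n}\right>$ into an $H_1^{2n-2}$ term and a $V_1^{2n-2}$ term, the latter being evaluated by \eqref{eq:R2}. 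Then I apply \eqref{eq:K3p} once more at row $3$, column $2n-2$ to the surviving $H_1^{2n-2}$ contribution, producing exactly the two configurations evaluated by \eqref{eq:R3} and \eqref{eq:R4}. Summing the three contributions and collecting the coefficient of $\left<\CK_4^{2n-2}\right>$, namely $(x+1)+1=x+2$, gives \eqref{eq:Rec5}.

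The only genuinely delicate point is the direct evaluation of the base cases, where one must track the disjoint circles carefully so as not to miscount $|S|$ for each state; once those are settled, the inductive steps are purely formal, being nothing more than bookkeeping of \eqref{eq:K3p} together with the substitutions from the previous Lemma.
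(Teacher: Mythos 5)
Your proposal is correct and follows essentially the same route as the paper: base cases \eqref{eq:Rec2}--\eqref{eq:Rec3} by direct verification, \eqref{eq:Rec4} from one application of \eqref{eq:K3p} plus \eqref{eq:R1}, and \eqref{eq:Rec5} from two successive applications of \eqref{eq:K3p} (at row $1$, column $2n-2$, then row $3$, column $2n-2$) resolved via \eqref{eq:R2}, \eqref{eq:R3}, and \eqref{eq:R4}, with the coefficient $(x+1)+1=x+2$ collected exactly as in the paper.
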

	
	\begin{proof}
		Formulas \eqref{eq:Rec2} and \eqref{eq:Rec3} are easily verified. For \eqref{eq:Rec4}, we have
		\begin{align*}
			\left<\CK^{2n}_4\right>&=\left<H_2^{2n-1}\CK_4^{2n}\right>+\left<V_2^{2n-1}\CK_4^{2n}\right>\quad \mbox{(by axiom \eqref{eq:K3p})}\\
			&=(x+1)^2\left<\CK_4^{2n-2}\right>+\left<V_2^{2n-1}\CK_4^{2n}\right> \quad \mbox{(by \eqref{eq:R1})}.
		\end{align*}
		For \eqref{eq:Rec5}, we have
		\begin{align*}
			\left<V_2^{2n-1}\CK^{2n}_4\right>&=	\left<H_1^{2n-2}V_2^{2n-1}\CK^{2n}_4\right>+	\left<V_1^{2n-2}V_2^{2n-1}\CK^{2n}_4\right>\quad \mbox{(by axiom \eqref{eq:K3p})}\\
			&= \left<H_1^{2n-2}V_2^{2n-1}\CK^{2n}_4\right>+(x+1)\left<\CK_4^{2n-2}\right>  \quad \mbox{(by \eqref{eq:R2})}\\
			&= \left<H_3^{2n-2}H_1^{2n-2}V_2^{2n-1}\CK^{2n}_4\right>+\left<V_3^{2n-2}H_1^{2n-2}V_2^{2n-1}\CK^{2n}_4\right>\quad \mbox{(by axiom \eqref{eq:K3p})}\\
			&\qquad+(x+1)\left<\CK_4^{2n-2}\right>\\
			&= (x+1)\left<V_2^{2n-3}\CK_4^{2n-2}\right>+\left<\CK_4^{2n-2}\right>+(x+1)\left<\CK_4^{2n-2}\right> \quad \mbox{(by \eqref{eq:R3} and \eqref{eq:R4})}\\
			&= (x+1)\left<V_2^{2n-3}\CK_4^{2n-2}\right>+(x+2)\left<\CK^{2n-2}_4\right>.
		\end{align*}
	\end{proof}
	
	These recursions enable us to derive a closed-form expression for $\left<\CK^{2n}_4\right>$. 
	
	\begin{theorem}
		The bracket polynomial of the Celtic knot $\CK_4^{2n}$ is given by 
		\begin{equation}\label{eq:Celticclosedform}
			\left<\CK^{2n}_4\right> = \dfrac{1}{2q}\left(x^2+x\right)\left(\left(x^2+2x+4+q\right)\left(\dfrac{p+q}{2}\right)^{n-1}-\left(x^2+2x+4-q\right)\left(\dfrac{p-q}{2}\right)^{n-1}\right),
		\end{equation}
		where \[p:=x^2+4x+4  \quad\text{and}\quad q:=\sqrt{x^4+4 x^3+12 x^2+20 x+12}.\]
	\end{theorem}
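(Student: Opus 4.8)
The plan is to collapse the coupled pair of recursions from the previous Lemma into a single second-order linear recurrence for $\langle\CK_4^{2n}\rangle$ alone, solve its characteristic equation, and determine the two arbitrary constants from the initial data. First I would abbreviate $a_n := \langle\CK_4^{2n}\rangle$ and $b_n := \langle V_2^{2n-1}\CK_4^{2n}\rangle$. Relation \eqref{eq:Rec4} reads $a_n = (x+1)^2 a_{n-1} + b_n$, which I solve for $b_n = a_n - (x+1)^2 a_{n-1}$, and hence also $b_{n-1} = a_{n-1} - (x+1)^2 a_{n-2}$. Substituting both expressions into \eqref{eq:Rec5} eliminates $b$ entirely, and after collecting the coefficient of $a_{n-1}$, namely $(x+1)^2 + (x+1) + (x+2)$, one obtains
\[
a_n = (x^2+4x+4)\,a_{n-1} - (x+1)^3\,a_{n-2} = p\,a_{n-1} - (x+1)^3 a_{n-2}, \qquad n\ge 3.
\]
This is exactly the step at which the quantity $p=x^2+4x+4$ of the statement emerges.

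Next I would pass to the characteristic equation $\lambda^2 - p\lambda + (x+1)^3 = 0$, whose roots are $\lambda_\pm = (p\pm q)/2$ with $q := \sqrt{p^2 - 4(x+1)^3}$. A short expansion confirms the identity $p^2 - 4(x+1)^3 = x^4 + 4x^3 + 12x^2 + 20x + 12 = q^2$, which matches the definition of $q$ in the statement. Since $\lambda_+\ne\lambda_-$ (as rational functions in $x$), the general solution of the recurrence can be written as $a_n = C_+\lambda_+^{\,n-1} + C_-\lambda_-^{\,n-1}$ for constants $C_\pm$ independent of $n$.

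Finally I would pin down $C_\pm$ from the two initial values. From \eqref{eq:Rec2} we have $a_1 = x^2+x$, and evaluating \eqref{eq:Rec4} and \eqref{eq:Rec5} at $n=2$ (using \eqref{eq:Rec2} and \eqref{eq:Rec3}) gives $a_2 = (x^2+x)(x^2+3x+4)$. Solving the linear system $C_+ + C_- = a_1$, $C_+\lambda_+ + C_-\lambda_- = a_2$, and exploiting $\lambda_+ - \lambda_- = q$ together with $\lambda_+ + \lambda_- = p$, yields $C_\pm = \pm\,(x^2+x)(x^2+2x+4\pm q)/(2q)$, which is precisely the closed form \eqref{eq:Celticclosedform}.

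There is no conceptual obstacle here, as every step is elementary; the two places that demand care are the bookkeeping in the elimination, so that the coefficients genuinely collapse to $p$ and $-(x+1)^3$, and the concluding simplification. In particular, writing $m := x^2+3x+4$, the key reduction $m - \lambda_- = (x^2+2x+4+q)/2$ is what converts the raw solution of the $2\times 2$ system into the symmetric form $(x^2+2x+4\pm q)$ displayed in the theorem, and I would verify the result by checking that it reproduces $a_1$ and $a_2$ directly.
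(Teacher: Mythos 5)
Your proposal is correct and takes essentially the same approach as the paper: the paper writes the coupled recursions \eqref{eq:Rec4}--\eqref{eq:Rec5} as a $2\times 2$ system with states matrix $M=\begin{pmatrix} x^2+3x+3 & x+1\\ x+2 & x+1\end{pmatrix}$ and applies the eigenvalue method, and your elimination of $b_n$ yields precisely the characteristic polynomial $\lambda^2-p\lambda+(x+1)^3=\chi(M,\lambda)$ of that matrix, with the same roots $(p\pm q)/2$ and the same initial data $a_1=x^2+x$, $a_2=(x^2+x)(x^2+3x+4)$. Your scalar second-order recurrence is a minor variant that in fact supplies the computation the paper explicitly omits, and your determination of $C_\pm$ matches the stated closed form.
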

	
	\begin{proof}
		Formulas \eqref{eq:Rec4} and \eqref{eq:Rec5} can be expressed in the following matrix form:
		
		\begin{equation}\label{eq:matrixform}
			\begin{aligned}
				\begin{pmatrix}
					\left<\CK_4^{2n}\right>\\
					\left<V_2^{2n-1}\CK_4^{2n}\right>
				\end{pmatrix}&=\begin{pmatrix}
					x^2+3x+3 & x+1\\
					x+2 & x+1
				\end{pmatrix}\begin{pmatrix}
					\left<\CK_4^{2n-2}\right>\\
					\left<V_2^{2n-3}\CK_4^{2n-2}\right>
				\end{pmatrix}\\
				& =\begin{pmatrix}
					x^2+3x+3 & x+1\\
					x+2 & x+1
				\end{pmatrix}^{n-1}\begin{pmatrix}
					\left<\CK_4^{2}\right>\\
					\left<V_2^{1}\CK_4^{2}\right>
				\end{pmatrix}\\
				&=\begin{pmatrix}
					x^2+3x+3 & x+1\\
					x+2 & x+1
				\end{pmatrix}^{n-1}\begin{pmatrix}
					x^2+x\\
					x
				\end{pmatrix}\quad \mbox{by (\eqref{eq:Rec2} and \eqref{eq:Rec3})}.
			\end{aligned}
		\end{equation}
		
		Note that \eqref{eq:matrixform} remains valid for $n=1$. Let $M:=\begin{pmatrix}
			x^2+3x+3 & x+1\\
			x+2 & x+1
		\end{pmatrix}$ be referred to as the \textit{states matrix}. The characteristic polynomial of $M$ is given by
		\[
		\chi(M,\lambda)=  \left(\lambda-\dfrac{p-q}{2} \right)\left(\lambda-\dfrac{p+q}{2} \right), 
		\]
		where \[p:=x^2+4x+4  \quad\text{and}\quad q:=\sqrt{x^4+4 x^3+12 x^2+20 x+12}.\]	 The $n$-th power of the state matrix is computed using the standard eigenvalue method. We omit the details here, but the computation yields the following results:
		\begin{equation*}
			\left<\CK^{2n}_4\right> = \dfrac{1}{2q}\left(x^2+x\right)\left(\left(x^2+2x+4+q\right)\left(\dfrac{p+q}{2}\right)^{n-1}-\left(x^2+2x+4-q\right)\left(\dfrac{p-q}{2}\right)^{n-1}\right)
		\end{equation*} 
		and
		\begin{equation*}
			\left<V_2^{2n-1}\CK^{2n}_4\right> = \dfrac{1}{2q}x\left(\left(x^2+4x+2+q\right)\left(\dfrac{p+q}{2}\right)^{n-1}-\left(x^2+4x+2-q\right)\left(\dfrac{p-q}{2}\right)^{n-1}\right).
		\end{equation*}
		
	\end{proof}
	Let $\CK(x,y):=\sum_{n\geq 1}^{}\left<\CK^{2n}_4\right>y^n$ denote the generating function of $\left(\left<\CK^{2n}_4\right>\right)_n$. By \eqref{eq:Celticclosedform}, we have
	\begin{equation}\label{eq:gfpq}
		\CK(x,y)=\dfrac{2 xy (x+1) ((x^2 +2 x +4 -p)y+2)}{(2 - (p  - q) y) (2 - (p + q) y)}.
	\end{equation}
	
	Substituting the expressions for $p$  and $q$  in terms of $x$, \eqref{eq:gfpq} simplifies to 
	\begin{equation}
		\CK(x,y)=\dfrac{xy (x+1) (1-x y)}{1-(x+2)^2 y+(x+1)^3 y^2}.
	\end{equation}
	
	Table~\ref{tab:coefofCK} displays the coefficients of the Kauffman bracket polynomial for the Celtic knot $\CK^{2n}_4$ for small values of $n$. The distribution of $\left(\left[x^k\right]\left<\CK^{2n}_4\right>\right)_{n,k}$ is recorded as sequence \seqnum{A386874} in the OEIS \cite{Sloane2025}. Recall that the coefficient  $\left[x^k\right]\left<\CK^{2n}_4\right>$ represents the number of Kaufman states of  the Celtic knot $\CK^{2n}_4$ that consist of exactly $k$  circles.
	\begin{table}[!h]
		\centering
		\makebox[0pt]{\resizebox{1.07\linewidth}{!}{%
			\begin{tabular}{@{}c|lllllllllllllll@{}}
				$n\setminus k$ & 0 & 1 & 2 & 3 & 4 & 5 & 6 & 7 & 8 & 9 & 10 & 11 & 12  & 13 & 14\\	
				\midrule
				1 & 0 &  1    &  1 \\
				2 & 0 &  4    &  7     & 4     & 1\\
				3 & 0 &  15   &  40    & 42    & 23    & 7      & 1\\
				4 & 0 &  56   &  201   & 306   & 262   & 140    & 48     & 10    & 1\\
				5 & 0 &  209  &  943   & 1877  & 2189  & 1672   & 881    & 325   & 82    & 13    & 1\\
				6 & 0 &  780  &  4239  & 10412 & 15368 & 15276  & 10841  & 5660  & 2194  & 624   & 125  & 16   & 1\\
				7 & 0 &  2911 &  18506 & 54051 & 96501 & 118175 & 105495 & 71107 & 36885 & 14817 & 4579 & 1064 & 177 & 19 &  1\\
				& $\ldots$
		\end{tabular}}}
		\caption{Coefficients in the expansion of $\left<\CK^{2n}_4\right>$ for small values of $n$}
		\label{tab:coefofCK}
	\end{table}
	
	\begin{remark}\mbox{}
		\begin{itemize}
			\item Column 1 in Table~\ref{tab:coefofCK} matches sequence \seqnum{A001353} in the OEIS \cite{Sloane2025},
			which is related to the determinant sequence for the weaving knots $S(4, n, (1, -1, 1))$ \cite{Kimetal2016}.
			
			\item For $n=2$, $\CK_4^4$ is a ``$4$-foil'', the shadow of the link $4_1^2$. The bracket polynomial of an $n$-foil, denoted $F_n$, that is the shadow of the $(2,n)$-torus link \cite{Ramaharo2018}, is \begin{equation}\label{eq:nfoil}
				\left<F_n\right> = (x+1)^n+x^2-1.
			\end{equation}			
			\item For $n=3$, we have the following decomposition. 
			
			First,  we write $\left<\CK^{6}_4\right> = \left<H_2^{3}\CK^{6}_4\right>+ \left<V_2^{3}\CK^{6}_4\right>$ by axiom \eqref{eq:K3p} (see Figure~\ref{fig:bracketCK4_6}).
			\begin{figure}[H]
				\centering
				\begin{subfigure}[t]{0.4\textwidth}
					\centering
					\includegraphics[height=3.05cm]{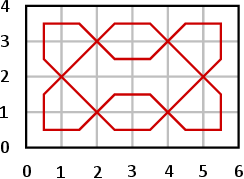}
					\caption{$H_2^{3}\CK^{6}_4$}
				\end{subfigure}% 
				\begin{subfigure}[t]{0.4\textwidth}
					\centering
					\includegraphics[height=3.05cm]{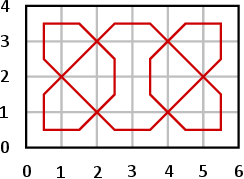}
					\caption{$V_2^{3}\CK^{6}_4$}
				\end{subfigure}%
				\caption{The states of the crossing at the construction dot in line 2 and column 3}
				\label{fig:bracketCK4_6}
			\end{figure}
		\end{itemize}
		Knot $H_2^{3}\CK^{6}_4$ is a $6$-foil (denoted $F_6$), while knot $V_2^{3}\CK^{6}_4$ is the connected sum of two trefoils (denoted $F_3\# F_3$). Recall that the bracket polynomial of the connected sum of two shadow diagrams $K$  and $K'$ satisfies \cite{Ramaharo2018}		
		\begin{equation}\label{eq:connectedsum}
			\left<K\# K'\right> = x^{-1}\left<K\right>\left<K'\right>.
		\end{equation}
		Hence, $\left<\CK^{6}_4\right>$ can be expressed as
		\begin{align*}
			\left<\CK^{6}_4\right> &= \left<F_6\right> + \left<F_3\# F_3\right>\\
			& = \left<F_6\right> + x^{-1}\left<F_3\right>^2\quad\mbox{(by \eqref{eq:connectedsum})}\\
			& = (x+1)^6 + x^2 - 1+ x^{-1}\left((x+1)^3+x^2-1\right)^2\quad\mbox{(by \eqref{eq:nfoil})}\\
			& = x^6+7 x^5+23 x^4+42 x^3+40 x^2+15 x.
		\end{align*}
	\end{remark}
	
	\begin{remark}
		In the context of Celtic design, the coefficient $\left[x^k\right]\left<\CK^{2n}_4\right>$ denotes the number of distinct barrier configurations \cite{GrossTucker2011} or equivalently, the number of break assignments \cite{AntonsenTaalman2021} that result in exactly $k$  connected components in the final design. Specifically, each such configuration corresponds to a choice of either a horizontal barrier (type-0 break) or a vertical barrier (type-1 break) at every interior construction dot (i.e., each crossing in the shadow diagram) which fully determine the state of the plaitwork.
		
		Let us consider the case $n=3$ and focus on the two extreme values $k=1$  and $k=6$. 
		
		For $k=1$ , there are 15 distinct barrier configurations that result in a single connected component. These 15 configurations fall into six equivalence classes under the action of the dihedral group $\mathcal{D}_2$, which includes reflection across the vertical axis, reflection across the horizontal axis and 180\textdegree~rotation \cite{AntonsenTaalman2021}. The representatives of these equivalence classes are shown in Figure~\ref{fig:equivalenceclass}. The size of equivalence classes are for Figure~\ref{subfig:s1}: 2,  \ref{subfig:s2}: 2, \ref{subfig:s3}: 2, \ref{subfig:s4}: 1, \ref{subfig:s5}: 4, and \ref{subfig:s2}: 4. 
		
		\begin{figure}[H]
			\centering
			
			\begin{subfigure}[t]{0.33\textwidth}
				\centering
				\includegraphics[height=3.05cm]{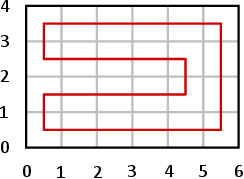}
				\caption{}
				\label{subfig:s1}
			\end{subfigure}%
			\hfill 
			\begin{subfigure}[t]{0.33\textwidth}
				\centering
				\includegraphics[height=3.05cm]{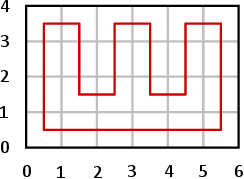}
				\caption{}
				\label{subfig:s2}
			\end{subfigure}
			\hfill 
			\begin{subfigure}[t]{0.33\textwidth}
				\centering
				\includegraphics[height=3.05cm]{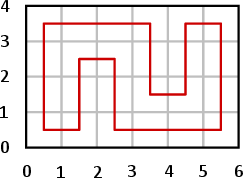}
				\caption{}
				\label{subfig:s3}
			\end{subfigure}
			
			\begin{subfigure}[t]{0.33\textwidth}
				\centering
				\includegraphics[height=3.05cm]{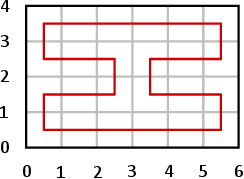}
				\caption{}
				\label{subfig:s4}
			\end{subfigure}%
			\hfill 
			\begin{subfigure}[t]{0.33\textwidth}
				\centering
				\includegraphics[height=3.05cm]{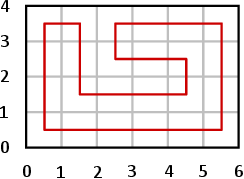}
				\caption{}
				\label{subfig:s5}
			\end{subfigure}
			\hfill 
			\begin{subfigure}[t]{0.33\textwidth}
				\centering
				\includegraphics[height=3.05cm]{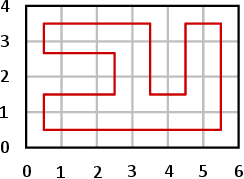}
				\caption{}
				\label{subfig:s6}
			\end{subfigure}
			\caption{The 6 equivalence classes for the single-component barrier configurations  on $\CK_4^6$}
			\label{fig:equivalenceclass}
		\end{figure}
		For $k=6$, there is only one such configuration that produces exactly 6 disjoint simple closed curves. This unique configuration is illustrated in Figure~\ref{fig:ck4611}. 
		\begin{figure}[H]
			\centering
			\includegraphics[height=3.05cm]{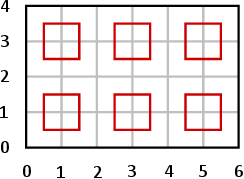}
			\caption{The unique 6-component barrier configuration for $ \CK_4^6 $}
			\label{fig:ck4611}
		\end{figure}		
	\end{remark}

	\section{Within the 4-tangle framework}\label{sec:tangle framework}
	In this section, we make use of a 4-tangle algebraic framework to analyze the Kauffman bracket of Celtic knot shadows.
	
	Kauffman's state model for tangles shows that any state $S$  of a $4$-tangle diagram $ T :=\protect\includegraphics[height=1.5cm,valign=c]{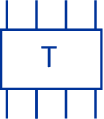}$  lies in the module generated by the disjoint union of loops and elements of the 4-strand diagram monoid $\mathcal{K}_4$ which consists of all planar isotopy classes of 4-endpoint diagrams with no closed components \cite{KitovNikov2020}. 
	
	The monoid $\mathcal{K}_4$   has 14 distinct basis elements, here denoted $g_1,g_2,\ldots,g_{14}$, corresponding to the 14 possible ways to connect four boundary points in pairs without crossings. These are depicted in Figure~\ref{fig:basisstates}. 
	
	\begin{figure}[!h]
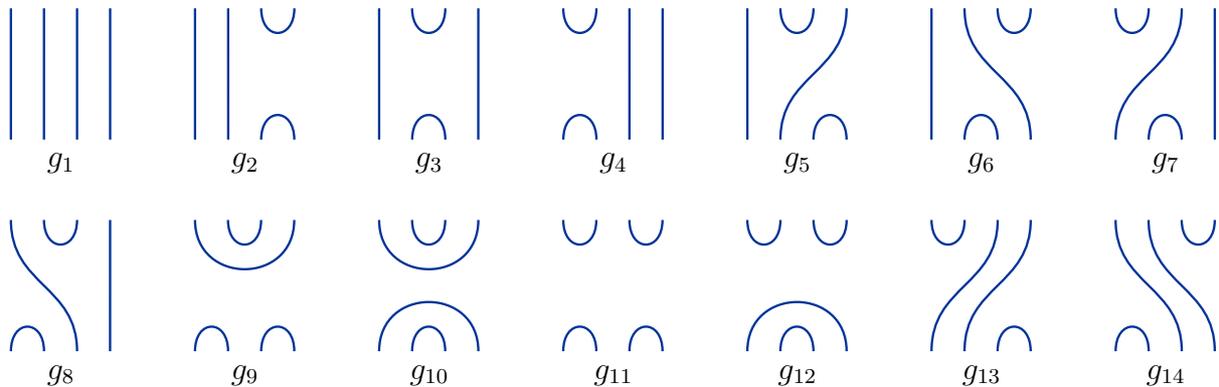

		\centering
		\makebox[0pt]{\begin{tabular}{wc{2cm}wc{2cm}wc{2cm}wc{2cm}wc{2cm}wc{2cm}wc{2cm}}
				\gi & \gii & \giii & \giv & \gv & \gvi & \gvii\\
				$g_1$ & $g_2$ &$g_3$ &$g_4$ &$g_5$ &$g_6$ &$g_7$\\[3ex]
				\gviii & \gix & \gx & \gxi & \gxii & \gxiii & \gxiv\\
				$g_8$ & $g_9$ &$g_{10}$ &$g_{11}$ &$g_{12}$ &$g_{13}$ &$g_{14}$
		\end{tabular}}
		\caption{The states basis element of $\mathcal{K}_4$}
		\label{fig:basisstates}
	\end{figure}
	
	Every state $S$ arising from a bracket expansion of $T$ can be expressed as a disjoint union
	\[S=\bigcirc^k\sqcup g,\]
	where $k\geq 0$, $ \bigcirc^k=\bigcirc\sqcup\bigcirc\sqcup\cdots\sqcup\bigcirc $ denotes the disjoint union of $ k $ circles, and $g\in\mathcal{K}_4$.
	
	Thus, by the bracket axioms \ref{itm:K1} and \ref{itm:K2}, the bracket evaluation of the state $S$  is \cite[p.~98]{Kauffman1993} \[\left<S\right>=x^k \left<g\right>.\]
	
	Consequently, the full bracket polynomial of the 4-tangle $T$ is a linear combination of the form
	
	\[
	\left<T\right>=\sum_{S} \left<S\right> = \sum_{i=1}^{14} a_i\left<g_i\right>,
	\]
	where each coefficient $ a_i\in \mathbb{Z}[x] $ counts (with powers of $x$) the number of states that reduce to the basis element $g_i$, possibly with additional circles.
	
	\begin{definition} 
		The concatenation (or multiplication) of two 4-tangles $T$ and $T'$, denoted $T T'$, is the 4-tangle formed by placing $T$  directly above $T'$  and connecting the bottom endpoints of $T$  to the corresponding top endpoints of $T'$  with non-crossing planar arcs:
		\[
		T:=\protect\includegraphics[height=1.5cm,valign=c]{tangleT},\qquad T':=\protect\includegraphics[height=1.5cm,valign=c]{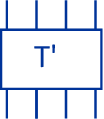},\quad  TT' = \protect\includegraphics[height=3cm,valign=c]{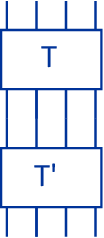}.
		\]
		
		Let $T_n:=T T\cdots T$  denote the $4$-tangle obtained by concatenating the $4$-tangle $T$  with itself $n$  times, with $T_0:=g_1$. By construction, for each $n\geq0$, the Kauffman bracket polynomial $\left<T_n\right>$  lies in the linear span of a fixed basis of tangle states $\left\{\left<g_1\right>,\ldots,\left<g_{14}\right>\right\}$. That is, there exist coefficients $a_1^{(n)},\ldots,a_{14}^{(n)}$  such that 
		\begin{equation}\label{eq:Tnformula}
			\left<T_n\right> = \sum_{i=1}^{14}a_i^{(n)}\left<g_i\right>
		\end{equation}
	\end{definition} 
	
	In the following, we compute the Kauffman bracket polynomial for $\CK_4^{2n} $ using a recursive approach based on the concatenation of a fundamental 4-tangle $G$, defined as
	\[
	G:=\go.
	\] 
	
	The following result can easily be established
	\[\begin{array}{lcl}
		\left<\go\right>&=&\left<\gi\right>+\left<\gii\right>+\left<\gii\right>+\left<\giv\right>\\[5ex] &+&\left<\gv\right>+\left<\gviii\right>+\left<\giv\right>+\left<\gvi\right>,
	\end{array}\]
	or more formally,
	\begin{equation}\label{eq:basisstates}
		\left<G\right>=\left<g_1\right>+\left<g_2\right>+\left<g_3\right>+\left<g_4\right>+\left<g_5\right>+\left<g_8\right>+\left<g_9\right>+\left<g_{11}\right>.
	\end{equation}
	
	\begin{lemma}
		For $n\geq 1$, $G_n$ satisfies the following decomposition:			
		\begingroup
		\allowdisplaybreaks 
		\begin{align*}
			\left<G_{n}\right> & = a_1^{(n-1)}\left<g_1\right> + \left(a_1^{(n-1)} + (x+2)a_2^{(n-1)} + (x+1)a_6^{(n-1)} + a_{14}^{(n-1)}\right)\left<g_2\right>\\
			& + \left(a_1^{(n-1)} + (x+1)a_3^{(n-1)} + a_5^{(n-1)} + a_{8}^{(n-1)}\right)\left<g_3\right>\\
			& + \left(a_1^{(n-1)} + (x+2)a_4^{(n-1)} + (x+1)a_7^{(n-1)} + a_{13}^{(n-1)}\right)\left<g_4\right>\\
			& + \left(a_1^{(n-1)} + (x+1)a_3^{(n-1)} + (x+2)a_{5}^{(n-1)} + a_{8}^{(n-1)}\right)\left<g_5\right>\\
			& + \left(a_2^{(n-1)} + (x+1)a_6^{(n-1)} + a_{14}^{(n-1)}\right)\left<g_6\right>\\
			& + \left(a_4^{(n-1)} + (x+1)a_7^{(n-1)} + a_{13}^{(n-1)}\right)\left<g_7\right>\stepcounter{equation}\tag{\theequation}\label{eq:Tn1}\\
			& + \left(a_1^{(n-1)} + (x+1)a_3^{(n-1)} + a_{5}^{(n-1)} + (x+2)a_{8}^{(n-1)} \right)\left<g_8\right>\\
			& + \left(a_1^{(n-1)} + (x+1)a_3^{(n-1)} + (x+2)a_{5}^{(n-1)} +(x+2)a_{8}^{(n-1)} \right.\\
			&\qquad \left. + (x^2+3x+3)a_{9}^{(n-1)} + (x^2+3x+2)a_{10}^{(n-1)}\right)\left<g_{9}\right>\\
			& + \left(a_9^{(n-1)} + (x+1)a_{10}^{(n-1)} \right)\left<g_{10}\right>\\
			& + \left(a_1^{(n-1)} + (x+2)a_2^{(n-1)} + (x+2)a_{4}^{(n-1)} +(x+1)a_{6}^{(n-1)} +(x+1)a_{7}^{(n-1)}\right.\\
			&\qquad \left.+ (x^2+3x+3)a_{11}^{(n-1)} + (x^2+3x+2)a_{12}^{(n-1)} + (x+2)a_{13}^{(n-1)} + (x+2)a_{14}^{(n-1)}\right)\left<g_{11}\right>\\
			& + \left(a_{11}^{(n-1)} + (x+1)a_{12}^{(n-1)} \right)\left<g_{12}\right>\\
			& + \left(a_{4}^{(n-1)} + (x+1)a_{7}^{(n-1)} + (x+2)a_{13}^{(n-1)} \right)\left<g_{13}\right>\\
			& + \left(a_{2}^{(n-1)} + (x+1)a_{6}^{(n-1)} + (x+2)a_{14}^{(n-1)} \right)\left<g_{14}\right>.\\
		\end{align*}
		\endgroup
	\end{lemma}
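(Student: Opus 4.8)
The plan is to exploit the recursive structure $G_n = G\,G_{n-1}$ together with the linearity of the bracket state sum under concatenation. By \eqref{eq:Tnformula} the bracket of $G_{n-1}$ expands as $\langle G_{n-1}\rangle=\sum_{i=1}^{14}a_i^{(n-1)}\langle g_i\rangle$, where each coefficient records, with its powers of $x$, the states of $G_{n-1}$ that reduce to $g_i$. Stacking the fixed tangle $G$ on top of such a state leaves any free circles untouched (each still contributing a factor $x$) and concatenates $G$ with $g_i$; hence
\[
\langle G_n\rangle \;=\; \langle G\,G_{n-1}\rangle \;=\; \sum_{i=1}^{14} a_i^{(n-1)}\,\langle G\,g_i\rangle .
\]
This reduces the whole problem to computing the fourteen brackets $\langle G\,g_i\rangle$ and re-expressing each in the basis $\langle g_1\rangle,\dots,\langle g_{14}\rangle$.

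For the second step I would substitute the expansion of $G$ from \eqref{eq:basisstates}, writing $\langle G\rangle=\sum_{j\in J}\langle g_j\rangle$ with $J=\{1,2,3,4,5,8,9,11\}$. Because the crossings of $G$ are disjoint from $g_i$, expanding $G$ before concatenating gives
\[
\langle G\,g_i\rangle \;=\; \sum_{j\in J}\langle g_j\,g_i\rangle ,
\]
where $g_j\,g_i$ is the product in the diagram monoid $\mathcal{K}_4$, obtained by stacking $g_j$ above $g_i$ and joining the shared endpoints. By planarity each such product is a single crossingless diagram, that is a basis element $g_{m(j,i)}$, together with some number $\ell(j,i)\ge 0$ of closed loops formed where the arcs of $g_j$ close up against those of $g_i$. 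By axioms \ref{itm:K1} and \ref{itm:K2} each loop contributes a factor of $x$, so $\langle g_j\,g_i\rangle = x^{\ell(j,i)}\langle g_{m(j,i)}\rangle$.

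The heart of the proof is then to tabulate this multiplication: for each of the eight generators $g_j$, $j\in J$, and each $g_i$, stack the diagrams, identify the resulting basis element $g_{m(j,i)}$ up to planar isotopy, and count the closed loops $\ell(j,i)$. Equivalently, one assembles the $14\times 14$ transition matrix $C$ whose $(k,i)$ entry is $\sum_{j\in J,\ m(j,i)=k} x^{\ell(j,i)}$, so that $a_k^{(n)}=\sum_i C_{k,i}\,a_i^{(n-1)}$. Collecting, for each target index $k$, all contributions with $m(j,i)=k$ reproduces exactly the coefficient of $\langle g_k\rangle$ displayed in \eqref{eq:Tn1}; the quadratic entries such as $x^2+3x+3$ and $x^2+3x+2=(x+1)(x+2)$ reflect the products in which two loops are closed, together with their lower-order loop contributions, while the bare constants correspond to loop-free identifications.

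I expect the main obstacle to be the bookkeeping rather than any conceptual difficulty: there are $8\times 14$ stacked diagrams to resolve, and the two error-prone points are (i) correctly recognizing which of the fourteen basis elements each composite is isotopic to, and (ii) counting the closed loops exactly, since a single miscount shifts a coefficient by a factor of $x$. To guard against this I would verify the table by independent consistency checks, comparing the resulting coefficients against the Celtic-framework formula of Section~\ref{sec:celticframework} for small $n$ via the eventual closure relation between $\langle G_n\rangle$ and $\langle\CK_4^{2n}\rangle$, and confirming coherence with the monoid relations of $\mathcal{K}_4$.
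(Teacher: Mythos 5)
Your proposal is correct and is essentially the paper's own proof: both expand $\langle G_n\rangle=\sum_{i=1}^{14}a_i^{(n-1)}\sum_{j=1}^{8}\langle g_i g_{i_j}\rangle$ using the state decomposition \eqref{eq:basisstates} of $G$, then evaluate each product in the diagram monoid $\mathcal{K}_4$ by a multiplication table (the paper's Table~\ref{tab:celticstatestable}), converting closed loops into powers of $x$ via \ref{itm:K1} and \ref{itm:K2} before collecting coefficients of each $\langle g_k\rangle$. The only deviation is your convention $G_n=G\,G_{n-1}$ versus the paper's $G_{n-1}\,G$; since $\mathcal{K}_4$ is noncommutative this is immaterial only provided the $8\times 14$ products are tabulated in the matching order, a bookkeeping point your plan otherwise handles correctly.
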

	
	\begin{proof}
		If $\left<G\right>=\sum_{j=1}^{8}\left<g_{i_j}\right>$, where the $g_{i_j}$  represent the basis states contributing to the 4-tangle $G$ as determined in \eqref{eq:basisstates}, then for the iterated 4-tangle $G_n$, we have 
		\[\left<G_n\right>=\left<G_{n-1}  G\right> = \sum_{i=1}^{14}a_i^{(n-1)}\sum_{j=1}^{8}\left<g_ig_{i_j}\right>,\]		
		where $a_i^{(n-1)}$  denotes the coefficient of the bracket $\left<g_i\right>$  in the expansion of $\left<G_{n-1}\right>$. 
		
		Each bracket $\left<g_ig_{i_j}\right>$  corresponds to the Kauffman state evaluation of the concatenated configuration formed by combining state $g_i$ from the $G_{n-1}$  factor with state $g_{i_j}$ from $G$. The result of this product is determined by a state multiplication rule which is tabulated in Table~\ref{tab:celticstatestable}. In this table, the entry in row $j$ and column $i$ gives the resulting state  from the formal product $g_ig_{i_j}$, with $i=1,\ldots,14$  indexing the column labels (top row) and $j=1,\ldots,8$  indexing the column labels (leftmost column). The conclusion follows by expressing the result in terms of the $\left<g_{i}\right>$'s.

		\begin{table}					
			\rotatebox{90}{
				\begin{minipage}{1.35\linewidth}
					\centering
					\includegraphics[width=\linewidth]{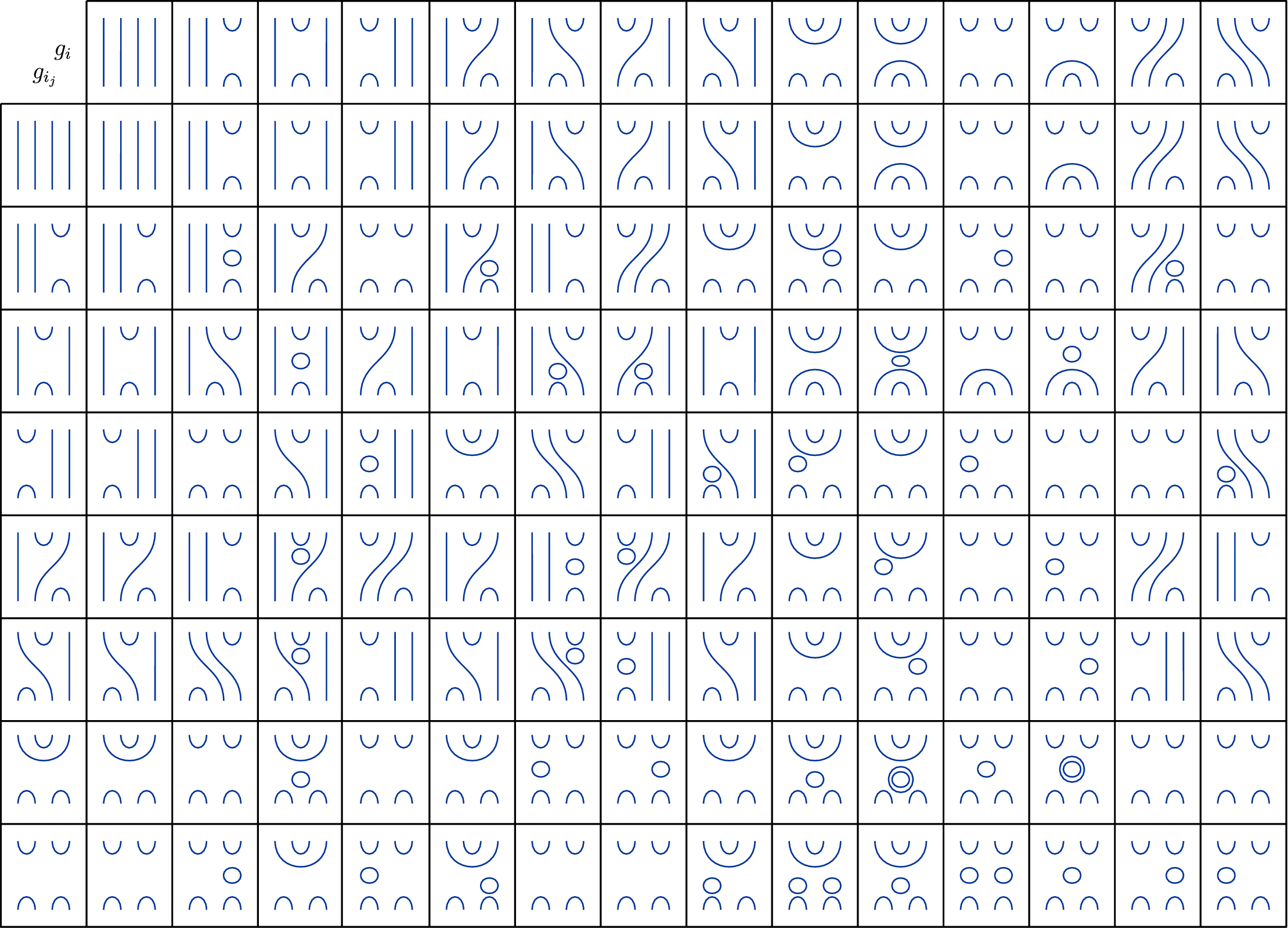}
					\caption{Multiplication table of $g_i$, $i=1,\ldots,14$ and $g_{i_j}$, $j=1,\ldots,8$}
					\label{tab:celticstatestable}
				\end{minipage}
			}					
		\end{table}				
	\end{proof}
	
	\begin{notation}
		For convenience, we identify the bracket polynomial expression in $\langle G \rangle=\sum_{i=1}^{14} a_i\left<g_i\right>$ with the coefficient vector
		\[
		A:=A(G) = \left[ a_1, a_2, a_3, a_4, a_5, a_6, a_7, a_8, a_9, a_{10}, a_{11}, a_{12}, a_{13}, a_{14} \right]^\intercal.
		\]
		Similarly, we identify the bracket polynomial $ \langle G_n \rangle $ with the vector
		\[
		A_n:=A(G_n) = \left[ a_1^{(n)}, a_2^{(n)}, a_3^{(n)}, a_4^{(n)}, a_5^{(n)}, a_6^{(n)}, a_7^{(n)}, a_8^{(n)}, a_9^{(n)}, a_{10}^{(n)}, a_{11}^{(n)}, a_{12}^{(n)}, a_{13}^{(n)}, a_{14}^{(n)} \right]^\intercal.
		\]
	\end{notation}
	
	\begin{lemma} For $n\geq 1$, the bracket vector of $G_n$ satisfies the following recursion:		
		\begin{equation}\label{eq:matrixdefinition}
			\renewcommand\arraystretch{1.325}
			\begin{pmatrix}
				a_1^{(n)}\\ 
				a_2^{(n)}\\ 
				a_3^{(n)}\\ 
				a_4^{(n)}\\ 
				a_5^{(n)}\\ 
				a_6^{(n)}\\ 
				a_7^{(n)}\\ 
				a_8^{(n)}\\ 
				a_9^{(n)}\\ 
				a_{10}^{(n)}\\ 
				a_{11}^{(n)}\\ 
				a_{12}^{(n)}\\ 
				a_{13}^{(n)}\\ 
				a_{14}^{(n)}
			\end{pmatrix}
			=\begin{pmatrix}
				1 & 0 & 0 & 0 & 0 & 0 & 0 & 0 & 0 & 0 & 0 & 0 & 0 & 0 \\
				1 & t & 0 & 0 & 0 & s & 0 & 0 & 0 & 0 & 0 & 0 & 0 & 1 \\
				1 & 0 & s & 0 & 1 & 0 & 0 & 1 & 0 & 0 & 0 & 0 & 0 & 0 \\
				1 & 0 & 0 & t & 0 & 0 & s & 0 & 0 & 0 & 0 & 0 & 1 & 0 \\
				1 & 0 & s & 0 & t & 0 & 0 & 1 & 0 & 0 & 0 & 0 & 0 & 0 \\
				0 & 1 & 0 & 0 & 0 & s & 0 & 0 & 0 & 0 & 0 & 0 & 0 & 1 \\
				0 & 0 & 0 & 1 & 0 & 0 & s & 0 & 0 & 0 & 0 & 0 & 1 & 0 \\
				1 & 0 & s & 0 & 1 & 0 & 0 & t & 0 & 0 & 0 & 0 & 0 & 0 \\
				1 & 0 & s & 0 & t & 0 & 0 & t & v & u & 0 & 0 & 0 & 0 \\
				0 & 0 & 0 & 0 & 0 & 0 & 0 & 0 & 1 & s & 0 & 0 & 0 & 0 \\
				1 & t & 0 & t & 0 & s & s & 0 & 0 & 0 & v & u & t & t \\
				0 & 0 & 0 & 0 & 0 & 0 & 0 & 0 & 0 & 0 & 1 & s & 0 & 0 \\
				0 & 0 & 0 & 1 & 0 & 0 & s & 0 & 0 & 0 & 0 & 0 & t & 0 \\
				0 & 1 & 0 & 0 & 0 & s & 0 & 0 & 0 & 0 & 0 & 0 & 0 & t
			\end{pmatrix}\begin{pmatrix}
				a_1^{(n-1)}\\ 
				a_2^{(n-1)}\\ 
				a_3^{(n-1)}\\ 
				a_4^{(n-1)}\\ 
				a_5^{(n-1)}\\ 
				a_6^{(n-1)}\\ 
				a_7^{(n-1)}\\ 
				a_8^{(n-1)}\\ 
				a_9^{(n-1)}\\ 
				a_{10}^{(n-1)}\\ 
				a_{11}^{(n-1)}\\ 
				a_{12}^{(n-1)}\\ 
				a_{13}^{(n-1)}\\ 
				a_{14}^{(n-1)}
			\end{pmatrix},
		\end{equation}
		where  
		\[
		s:= x+1, \quad t:=x+2, \quad u:=x^2 +3x +2 \quad\text{and}\quad v:=x^2 +3x +3.
		\]
	\end{lemma}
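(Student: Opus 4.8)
The plan is to recognize that equation \eqref{eq:matrixdefinition} is nothing more than a matrix transcription of the coefficient identities already established in the previous Lemma, equation \eqref{eq:Tn1}. Thus the proof reduces to a careful reading-off of coefficients, carried out under the abbreviations $s:=x+1$, $t:=x+2$, $u:=x^2+3x+2$ and $v:=x^2+3x+3$; no new diagrammatic or algebraic input is required.

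First I would recall that, by the definition \eqref{eq:Tnformula}, the bracket polynomial $\langle G_n\rangle$ has an expansion $\sum_{i=1}^{14} a_i^{(n)}\langle g_i\rangle$ in the module generated by the monoid basis $g_1,\dots,g_{14}$ of $\mathcal{K}_4$. The coefficient $a_i^{(n)}$ is well-defined because every Kauffman state of $G_n$ reduces, after deleting its closed loops, to exactly one basis element of $\mathcal{K}_4$; hence $a_i^{(n)}$ is precisely the coefficient of $\langle g_i\rangle$ on the right-hand side of \eqref{eq:Tn1}.

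Next, for each $i=1,\dots,14$ I would read the bracketed expression multiplying $\langle g_i\rangle$ in \eqref{eq:Tn1} and rewrite it using $s,t,u,v$; this exhibits $a_i^{(n)}$ as an explicit linear combination of $a_1^{(n-1)},\dots,a_{14}^{(n-1)}$. The matrix $M$ in \eqref{eq:matrixdefinition} is then defined so that its $(i,j)$-entry is the coefficient of $a_j^{(n-1)}$ in that combination, and equating the two yields the claim. For instance, the coefficient of $\langle g_2\rangle$ in \eqref{eq:Tn1} is $a_1^{(n-1)} + (x+2)a_2^{(n-1)} + (x+1)a_6^{(n-1)} + a_{14}^{(n-1)} = a_1^{(n-1)} + t\,a_2^{(n-1)} + s\,a_6^{(n-1)} + a_{14}^{(n-1)}$, which populates the second row of $M$ with entries $1,t,0,0,0,s,0,0,0,0,0,0,0,1$, exactly as displayed; likewise the coefficient of $\langle g_{11}\rangle$ reproduces the eleventh row $1,t,0,t,0,s,s,0,0,0,v,u,t,t$. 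Running through all fourteen rows in this fashion completes the verification.

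The only point demanding care — and the closest thing to an obstacle — is purely bookkeeping: matching each row index of $M$ with the correct basis-element index and respecting the transposition convention, so that row $i$ records the multipliers appearing in the expression for $a_i^{(n)}$ (equivalently, the coefficient of $\langle g_i\rangle$), while column $j$ corresponds to the input coefficient $a_j^{(n-1)}$. Because the substitution of $s,t,u,v$ makes the match term-by-term, the proof is in essence a one-line appeal to \eqref{eq:Tn1}.
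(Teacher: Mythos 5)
Your proposal is correct and takes essentially the same approach as the paper, whose entire proof is the single sentence that recursion \eqref{eq:matrixdefinition} is directly derived from \eqref{eq:Tn1}, i.e., by reading off the coefficient of each $a_j^{(n-1)}$ in the expression multiplying $\left<g_i\right>$ and placing it in row $i$, column $j$ of the states matrix. Your sample verifications of rows $2$ and $11$ simply make explicit the term-by-term bookkeeping the paper leaves implicit.
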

	
	\begin{proof}
		Recursion \eqref{eq:matrixdefinition} is directly derived from \eqref{eq:Tn1}.
	\end{proof}			
	
	Note that \eqref{eq:matrixdefinition} can also be written as		
	\begin{equation}
		A_{n} = M A_{n-1} = M^n A_0,
	\end{equation} with $$A_0=\left[1,0,0,0,0,0,0,0,0,0,0,0,0,0\right]^\intercal,$$ where $M$ denotes the $14\times 14$ matrix which is referred to as \textit{states matrix}.
	
	The $n$-th power of the state matrix is computed using the standard eigenvalue method. Hence, the characteristic polynomial of $M$ is given by 
	\[\chi(M,\lambda)=(\lambda-1) (\lambda-x-1)^3 \left(\lambda-x-2-r\right)^3 \left(\lambda-x-2+r\right)^3 \left(\lambda-\dfrac{p-q}{2} \right)^2\left(\lambda-\dfrac{p+q}{2} \right)^2,\]
	where \[r:=\sqrt{2x+3}, \quad p:=x^2+4x+4  \quad\text{and}\quad q:=\sqrt{x^4+4 x^3+12 x^2+20 x+12}.\]
	Then, the values for the $a_i^{n}$'s are given as follows: 
	\begingroup
	\allowdisplaybreaks
	\begin{align*}
		a_1^{(n)} &= 1\\
		a_2^{(n)} &= 2 wq x \left(x^2  -  1\right)^2 \Big\{\lambda_1^n \left(2 r x^2 - 4 r\right) + \lambda_2^n \left((r + 1) x^2 + 2 x\right) + \lambda_3^n \left((r - 1) x^2 - 2 x\right) - 4 r x^2 + 4 r\Big\}\\
		a_3^{(n)} &= 4w q x^2 \left(x^2  -  1\right)^2 \Big\{\lambda_2^n ((r + 1) x + 2) + \lambda_3^n ((r - 1) x - 2) - 2 r x\Big\}\\
		a_4^{(n)} &= 2 wq x \left(x^2  -  1\right)^2 \Big\{\lambda_1^n \left(2 r x^2 - 4 r\right) + \lambda_2^n \left((r + 1) x^2 + 2 x\right) + \lambda_3^n \left((r - 1) x^2 - 2 x\right) - 4 r x^2 + 4 r\Big\}\\
		a_5^{(n)} &= 2w q x^2 \left(x^2  -  1\right)^2 \Big\{\lambda_2^n \left(-2 x^2-2 x+2-2 r\right) + \lambda_3^n \left(2 x^2+2 x-2-2 r\right) + 4 r\Big\}\\
		a_6^{(n)} &= 2w q x^2 \left(x^2  -  1\right) \Big\{\lambda_2^n  \left(-2 x^3 - (2 r + 2) x^2 + 2 x + 2 + 2 r\right)\Big. \\
		&\qquad\Big. + \lambda_3^n  \left(2 x^3 + (2 - 2 r) x^2 - 2 x - 2 + 2 r\right)  +  4 r x^2  -  4\Big\} \\
		a_7^{(n)} &= 2w q x^2 \left(x^2  -  1\right) \Big\{\lambda_2^n  \left(-2 x^3 - (2 r + 2) x^2 + 2 x + 2 + 2 r\right)\Big. \\
		&\qquad\Big. + \lambda_3^n  \left(2 x^3 + (2 - 2 r) x^2 - 2 x - 2 + 2 r\right)  +  4 r x^2  -  4\Big\} \\
		a_8^{(n)} &= 2w q x^2 \left(x^2  -  1\right)^2 \Big\{\lambda_2^n  \left(-2 x^2 - 2 x + 2 - 2 r\right) + \lambda_3^n  \left(2 x^2 + 2 x - 2 - 2 r\right) + 4 r\Big\}\\
		a_9^{(n)} &= 2w x \left(x^2  -  1\right) \Big\{\lambda_2^n  q \left(4 x^4 + 4 x^3 + (4 r - 8) x^2 - 4 x + 4 - 4 r\right)\Big.\\
		&\qquad + \lambda_3^n  q\left(-4 x^4 - 4 x^3 + (4 r + 8) x^2 + 4 x - 4 - 4 r\right)\\
		&\qquad + \lambda_4^n  r\left(x^7 + 4  x^6 + 6  x^5 + 2  x^4  -   \left(q^2  + 8 \right)x^3 -  (2 q  + 16 )x^2 +  \left(2 q^2  - 16 \right)x + 4 q - 8\right)\\
		&\qquad + \left.\lambda_5^n  r\left(  -  x^7 - 4  x^6 - 6  x^5 - 2  x^4 +  \left(q^2  + 8 \right)x^3 +  (16  - 2 q )x^2 +  \left(16  - 2 q^2 \right)x + 4 q + 8 \right)\right.\\
		&\qquad\Big. - 4 q  x^2\Big\}\\
		a_{10}^{(n)} &= 4 wx \left(x^2  -  1\right) \Big\{ \lambda_2^n  q\left( ( - r - 1) x^3 - 2  x^2 + (r + 1) x + 2 \right)\Big.\\
		&\qquad  +  \lambda_3^n  q\left( (1 - r) x^3 + 2  x^2 +  (r - 1) x - 2 \right)\\
		&\qquad  +  \lambda_4^n  r\left(x^5 + 2  x^4 + q  x^3 - 2  x^2 - (2 q  + 4 )x - 4 \right)\\
		&\qquad  +   \Big. \lambda_5^n  r\left( -  x^5 - 2  x^4 + q  x^3 + 2  x^2 +  (4  - 2 q )x  + 4 \right) + 2 q r x\Big\}\\
		a_{11}^{(n)} &=2w x \left(x^2  -  1\right)  \Big\{ \lambda_2^n  q \left( ( - 4 r - 4) x^3 - 8  x^2 +  (4 r + 4) x + 8 \right)\Big.\\
		&\qquad + \lambda_3^n  q \left( (4 - 4 r) x^3 + 8  x^2 +  (4 r - 4) x - 8 \right)\\
		&\qquad + \lambda_4^n  r \left(  -  x^6  -  6 x^5  -  10  x^4  +  2 q  x^3   +   \left(q^2  + 20 \right)x^2  +  (24  - 4 q )x  + 8  - 2 q^2\right)\\
		&\qquad\Big.  + \lambda_5^n  r \left(   x^6  +  6 x^5  +  10  x^4  +  2 q  x^3   -   \left(q^2  + 20 \right)x^2  -  (4 q  + 24 )x  - 8  + 2 q^2\right) + 4 q r x^3\Big\}\\
		a_{12}^{(n)} &=4w x \left(x^2  -  1\right)  \Big\{ \lambda_2^n  q\left(2 x^3 + (2 r + 2) x^2 - 2 x - 2 - 2 r\right)\Big.\\
		&\qquad  + \lambda_3^n  q\left(-2 x^3 + (2 r - 2) x^2 + 2 x + 2 - 2 r\right)\\
		&\qquad  + \lambda_4^n  r\left(-  x^4 - 4  x^3 - q  x^2 + 8  x + 4  + 2 q\right)\\
		&\qquad \Big.  +  \lambda_5^n  r\left(  x^4 + 4  x^3 - q  x^2 - 8  x - 4  + 2 q\right) - 2 q r x^2\Big\}\\
		a_{13}^{(n)} &=2w q x \left(x^2  -  1\right)^2 \Big\{\lambda_1^n  \left( - 2 r x^2 + 4 r\right) + \lambda_2^n  \left((r + 1) x^2 + 2 x\right) + \lambda_3^n  \left((r - 1) x^2 - 2 x\right) - 4 r\Big\}\\
		a_{14}^{(n)} &=2 wq x \left(x^2  -  1\right)^2 \Big\{\lambda_1^n  \left( - 2 r x^2 + 4 r\right) + \lambda_2^n  \left((r + 1) x^2 + 2 x\right) + \lambda_3^n  \left((r - 1) x^2 - 2 x\right) - 4 r\Big\},
	\end{align*}
	\endgroup
	where\[
	\begin{array}{c}
		r:=\sqrt{2x+3}, \quad q:=\sqrt{x^4+4 x^3+12 x^2+20 x+12},\quad w :=\dfrac{1}{8 q r x^2 \left(x^2-2\right) \left(x^2-1\right)^2},\\ 
		\lambda_1 := x + 1, \quad
		\lambda_2 := x + 2 - \sqrt{2 x + 3}, \quad
		\lambda_3 := x + 2 + \sqrt{2 x + 3},\\
		\lambda_4 := \frac{1}{2} \left(x^2+4 x+4+\sqrt{x^4+4 x^3+12 x^2+20 x+12}\right),\\
		\text{and}\quad\lambda_5 := \frac{1}{2} \left(x^2+4 x+4+\sqrt{x^4+4 x^3+12 x^2+20 x+12}\right).
	\end{array}
	\]
	
	Next, we introduce a closure for a 4-tangle $T$, denoted $\overline{T}$, which is obtained  by connecting the two top endpoints to each other and the two bottom endpoints to each other without introducing any crossings as displayed in Figure~\ref{subfig:numeratorclosure}. In the same fashion, the closure for $G_n$, denoted $\overline{G_n}$, is illustrated in Figure~\ref{subfig:Tnclosure}.
	
	\begin{figure}[!h]
		\centering
		\begin{subfigure}[t]{0.4\textwidth}
			\centering
			\includegraphics[width=2.cm]{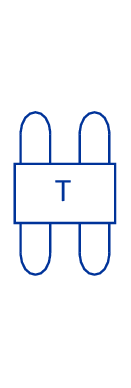}
			\caption{$\overline{T}$}
			\label{subfig:numeratorclosure}
		\end{subfigure}% 
		\begin{subfigure}[t]{0.4\textwidth}
			\centering
			\includegraphics[width=2.cm]{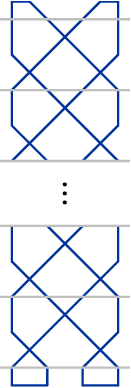}
			\caption{$\overline{G_n}$}
			\label{subfig:Tnclosure}
		\end{subfigure}%
		\caption{The closure operation}
	\end{figure}
	
	\begin{lemma}
		The bracket polynomial for the closure of $G_n$ satisfies
		\begin{equation}
			\begin{aligned}
				\left<\overline{G_n}\right> &= a_{11}^{(n)}x^4 +  \left(a_2^{(n)}+a_4^{(n)}+a_9^{(n)}+a_{12}^{(n)}+a_{13}^{(n)}+a_{14}^{(n)}\right)x^3\\
				&+\left(a_1^{(n)}+a_5^{(n)}+a_6^{(n)}+a_{7}^{(n)}+a_{8}^{(n)}+a_{10}^{(n)}\right)x^2+a_3^{(n)}x.
			\end{aligned}
		\end{equation}
	\end{lemma}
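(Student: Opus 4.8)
The plan is to push the closure operation through the state-sum decomposition of $\langle G_n\rangle$ and then evaluate the closure of each individual basis tangle. I would start from \eqref{eq:Tnformula}, recalling that $a_i^{(n)}$ records, with the appropriate powers of $x$, the states of $G_n$ that resolve to the basis element $g_i$: each such state has the form $S=\bigcirc^{k}\sqcup g_i$ and contributes $x^{k}$, so $a_i^{(n)}=\sum_{S\to g_i}x^{k}$. Forming $\overline{G_n}$ amounts to attaching the same non-crossing capping arcs of Figure~\ref{subfig:Tnclosure} (joining the four top endpoints in pairs and the four bottom endpoints in pairs) to every state simultaneously. Since the $k$ free loops of a state lie in the interior and are disjoint from the boundary, the capping arcs never meet them, whence $\overline{S}=\bigcirc^{k}\sqcup\overline{g_i}$.

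The next step is to evaluate $\langle\overline{g_i}\rangle$. As each $g_i$ is a crossingless planar pairing of the eight boundary points, its closure $\overline{g_i}$ is again crossingless, hence a disjoint union of, say, $\ell_i$ circles; by axioms \ref{itm:K1} and \ref{itm:K2} this gives $\langle\overline{g_i}\rangle=x^{\ell_i}$, and more generally $\langle\overline{S}\rangle=x^{k+\ell_i}$. Summing over all states and regrouping by the underlying basis element yields the master identity
\[
\left<\overline{G_n}\right>=\sum_{i=1}^{14}a_i^{(n)}\,x^{\ell_i},
\]
so the statement becomes equivalent to the loop counts $\ell_3=1$; $\ell_1=\ell_5=\ell_6=\ell_7=\ell_8=\ell_{10}=2$; $\ell_2=\ell_4=\ell_9=\ell_{12}=\ell_{13}=\ell_{14}=3$; and $\ell_{11}=4$. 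Collecting the coefficients sharing a common exponent then reproduces the four displayed groupings.

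The main work, and the only place where an error could slip in, is the determination of the fourteen loop counts $\ell_i$. I would carry this out mechanically: fix a labelling of the four top and four bottom endpoints, record each $g_i$ from Figure~\ref{fig:basisstates} as an explicit non-crossing pairing of these eight points, adjoin the fixed pairing induced by the capping arcs, and count the cycles of the resulting $2$-regular diagram, equivalently the orbits of the product of the two pairings viewed as involutions. For instance, $g_{11}$ is the fully capped element whose four arcs coincide with the four capping arcs, so each arc doubles into its own circle and $\ell_{11}=4$, whereas $g_3$ interleaves maximally with the caps to form a single circle, giving $\ell_3=1$. As a global consistency check I would set $x=1$: every $\langle\overline{g_i}\rangle$ then equals $1$, so the right-hand side collapses to $\sum_i a_i^{(n)}\big|_{x=1}$, the total number of Kauffman states of $G_n$, namely $2^{3n}$; this matches the computation $\langle\overline{G_1}\rangle=x(x+1)^3$, whose coefficients sum to $8=2^{3}$.
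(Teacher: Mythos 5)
Your proposal is correct and takes essentially the same route as the paper: you push the closure through the state-sum decomposition (the paper's equation \eqref{eq:closuredefinition}) and evaluate the fourteen closed basis elements, and your loop counts $\ell_i$ agree exactly with the values $\left<\overline{g_i}\right>=x^{\ell_i}$ tabulated in Figure~\ref{fig:Nclosure} ($\ell_3=1$; $\ell_1=\ell_5=\ell_6=\ell_7=\ell_8=\ell_{10}=2$; $\ell_2=\ell_4=\ell_9=\ell_{12}=\ell_{13}=\ell_{14}=3$; $\ell_{11}=4$). Your explicit justification that the capping arcs cannot meet the free circles, and the $x=1$ sanity check against $\left<\overline{G_1}\right>=x(x+1)^3$, are welcome additions but do not constitute a different argument.
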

	
	\begin{proof}
		The crossings are smoothed without affecting the closure, hence 
		
		\begin{equation}\label{eq:closuredefinition}
			\left<\overline{G_n}\right>=\sum_{i=1}^{14}a_i^{(n)}\left<\overline{g_i}\right>,
		\end{equation} and the only point remaining concerns the evaluation of the brackets to the closure of the elements of $ \mathcal{K}_4 $ (see Figure~\ref{fig:Nclosure}).
		\begin{figure}[!h]
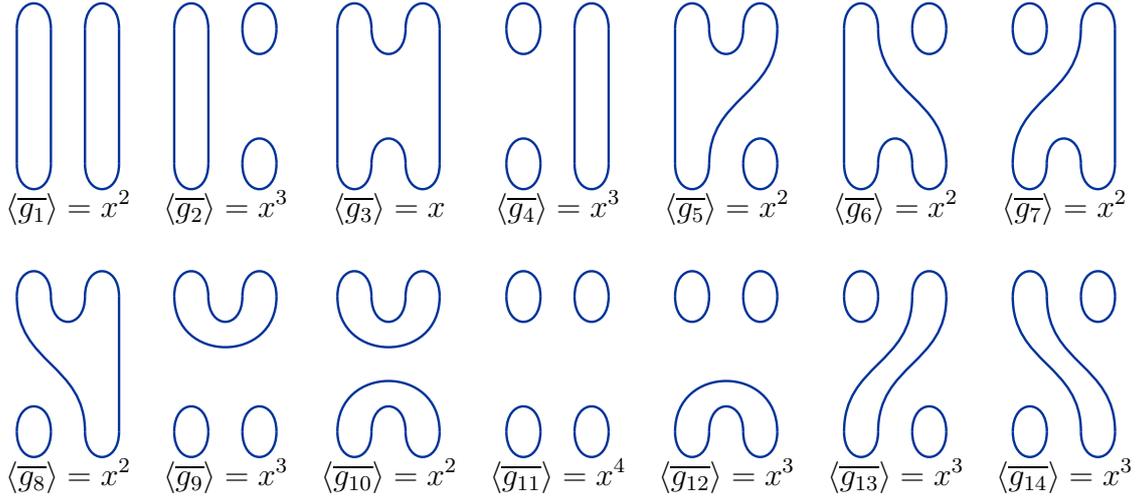

			\centering
			\makebox[0pt]{\begin{tabular}{ccccccc}
					\gicI & \giicI & \giiicI & \givcI & \gvcI & \gvicI & \gviicI \\
					$\left<\overline{g_1}\right>=x^2$ &  	$\left<\overline{g_2}\right>=x^3$   &	$\left<\overline{g_3}\right>=x$   &	$\left<\overline{g_4}\right>=x^3$   &	$\left<\overline{g_5}\right>=x^2$ & $\left<\overline{g_6}\right>=x^2$   &	$\left<\overline{g_7}\right>=x^2$\\[3ex]
					\gviiicI & \gixcI & \gxcI & \gxicI & \gxiicI & \gxiiicI & \gxivcI \\
					$\left<\overline{g_{8}}\right>=x^2$ &  	$\left<\overline{g_9}\right>=x^3$   &	$\left<\overline{g_{10}}\right>=x^2$ & $\left<\overline{g_{11}}\right>=x^4$   &	$\left<\overline{g_{12}}\right>=x^3$  &	$\left<\overline{g_{13}}\right>=x^3$   &	$\left<\overline{g_{14}}\right>=x^3$ 
			\end{tabular}}
			\caption{Closures of the elements of $\mathcal{K}_4$}
			\label{fig:Nclosure}
		\end{figure}
	\end{proof}
	
	Replacing the $a_i^{(n)}$'s and the $\left<g_i\right>$'s by their formal expressions, \eqref{eq:closuredefinition} becomes		
	\begin{align*}
		\left<\overline{G_n}\right>&= \dfrac{1}{2 q}x \left(\lambda_5^n \left(x^3+2 x^2+(q+2) x+2\right)-\lambda_4^n \left(x^3+2 x^2-(q-2) x+2\right)\right)\\
		& =\dfrac{1}{2 q}x \left(\left(\dfrac{p+q}{2}\right)^n \left(x^3+2 x^2+(q+2) x+2\right)-\left(\dfrac{p+q}{2}\right)^n \left(x^3+2 x^2-(q-2) x+2\right)\right),
	\end{align*}
	where \[p:=x^2+4x+4  \quad\text{and}\quad q:=\sqrt{x^4+4 x^3+12 x^2+20 x+12}.\]		
	
	Finally, the following Theorem establishes the link between  $\left<\overline{G_n}\right>$ and  the Kaufman bracket polynomial of the Celtic knot $\left<\CK_4^{2n}\right>$:
	\begin{theorem}\label{thm:bracketrelation}
		$\left<\overline{G_n}\right>$ and $\left<\CK^{2n}_4\right>$ satisfy the following identity:
		\begin{equation}\label{eq:NTCK}
			\left<\overline{G_n}\right> = (x + 1)^2 \left<\CK^{2n}_4\right>\quad \mbox{for $n\geq 1$}.
		\end{equation}
	\end{theorem}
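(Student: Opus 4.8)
The plan is to prove \eqref{eq:NTCK} by directly comparing the two closed forms now in hand: the expression \eqref{eq:Celticclosedform} for $\left<\CK^{2n}_4\right>$ and the explicit formula for $\left<\overline{G_n}\right>$ displayed immediately above the theorem. Both are written over the same quadratic data. Set $\mu_+:=\frac{p+q}{2}$ and $\mu_-:=\frac{p-q}{2}$, the two roots of $\lambda^2-p\lambda+(x+1)^3$ (indeed $\mu_++\mu_-=p$ and $\mu_+\mu_-=\frac{p^2-q^2}{4}=(x+1)^3$, the latter being exactly $\det M$ for the $2\times2$ Celtic states matrix). These are the eigenvalues $\lambda_4,\lambda_5$ of the $14\times14$ states matrix as well as the eigenvalues of the Celtic states matrix, and the displayed closure formula shows that $\left<\overline{G_n}\right>$ retains only the $\mu_\pm$ terms. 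Hence both $\left<\overline{G_n}\right>$ and $(x+1)^2\left<\CK^{2n}_4\right>$ are $\mathbb{Z}[x,q]$-linear combinations of $\mu_+^{\,n}$ and $\mu_-^{\,n}$.

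First I would put both sides in the form $\alpha\,\mu_+^{\,n}+\beta\,\mu_-^{\,n}$. On the right I multiply \eqref{eq:Celticclosedform} by $(x+1)^2$, use $(x+1)^2(x^2+x)=x(x+1)^3$, and absorb the exponent shift via $\mu_\pm^{\,n}=\mu_\pm\,\mu_\pm^{\,n-1}$; on the left I read off the $\mu_\pm^{\,n}$ coefficients from the closure formula, pairing each eigenvalue with the polynomial factor carrying the matching sign of $q$. Since $\mu_+\neq\mu_-$, the sequences $\mu_+^{\,n-1}$ and $\mu_-^{\,n-1}$ are linearly independent over $\mathbb{Z}[x]$, so \eqref{eq:NTCK} holds for every $n\geq1$ if and only if the coefficients of $\mu_+^{\,n-1}$ and of $\mu_-^{\,n-1}$ agree. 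Both reduce to the single two-signed identity
\[
(x+1)^3\bigl(x^2+2x+4\pm q\bigr)=\tfrac12\,(p\pm q)\bigl(x^3+2x^2+2x+2\pm qx\bigr).
\]
Splitting each side into its $q$-free and $q$-linear parts and using $q^2=x^4+4x^3+12x^2+20x+12$, this collapses to the two checks
\[
\tfrac12\bigl((x^2+4x+4)x+x^3+2x^2+2x+2\bigr)=(x+1)^3
\]
\[
\tfrac12\bigl((x^2+4x+4)(x^3+2x^2+2x+2)+q^2x\bigr)=(x+1)^3(x^2+2x+4),
\]
both of which are routine polynomial verifications.

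A slightly lighter route avoids the explicit $a_i^{(n)}$ altogether. Because the closure formula shows that $\left<\overline{G_n}\right>$ lies in the span of $\mu_+^{\,n},\mu_-^{\,n}$, both sides of \eqref{eq:NTCK} satisfy the same second-order recurrence
\[
f_n=p\,f_{n-1}-(x+1)^3 f_{n-2},
\]
whose characteristic polynomial is the common quadratic factor $\left(\lambda-\mu_+\right)\left(\lambda-\mu_-\right)$. It then suffices to verify \eqref{eq:NTCK} for two consecutive values. For $n=1$ one takes $A_1=MA_0$ (the first column of the states matrix), applies the closure lemma, and obtains
\[
\left<\overline{G_1}\right>=x^4+3x^3+3x^2+x=x(x+1)^3=(x+1)^2\left<\CK^{2}_4\right>,
\]
using \eqref{eq:Rec2}; the case $n=2$ is the analogous finite computation with $A_2=MA_1$ and $\left<\CK^{4}_4\right>=x^4+4x^3+7x^2+4x$, after which the recurrence propagates the identity to all $n\geq1$.

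I expect the main obstacle to be purely bookkeeping rather than conceptual: keeping the exponent shift between the $\mu_\pm^{\,n-1}$ of \eqref{eq:Celticclosedform} and the $\mu_\pm^{\,n}$ of the closure formula consistent, and cleanly separating the $q$-free and $q$-linear parts so that the single substitution $q^2=x^4+4x^3+12x^2+20x+12$ settles both resulting polynomial identities. The one genuinely nontrivial structural input, namely that $\left<\overline{G_n}\right>$ involves only the eigenvalues $\mu_\pm$ and not the other roots $1,\,x+1,\,x+2\pm r$ of $\chi(M,\lambda)$, is already recorded in the displayed closure formula, so no hard step remains.
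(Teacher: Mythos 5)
Your proposal is correct, but it takes a genuinely different route from the paper. The paper's proof of Theorem~\ref{thm:bracketrelation} is a one-line diagrammatic observation: in Figure~\ref{subfig:Tnclosure}, the closure $\overline{G_n}$ is, up to isotopy on the sphere, exactly the diagram $\CK_4^{2n}$ with two extra reducible crossings created by the closure arcs, and on a shadow diagram each such crossing multiplies the bracket by $(x+1)$ (its two smoothings give the same diagram, once plain and once with a disjoint circle), whence the factor $(x+1)^2$. You instead prove the identity algebraically, by matching the two independently derived closed forms — \eqref{eq:Celticclosedform} from the Celtic framework of Section~3 and the displayed formula for $\left<\overline{G_n}\right>$ from the spectral computation of Section~4 — and your bookkeeping is sound: with $P:=x^3+2x^2+2x+2$ the comparison of $\mu_\pm^{\,n-1}$ coefficients reduces to $\tfrac{1}{2}(px+P)=(x+1)^3$ and $\tfrac{1}{2}\left(pP+q^2x\right)=(x+1)^3\left(x^2+2x+4\right)$, both of which hold, and your observation $\mu_+\mu_-=\tfrac{1}{4}\left(p^2-q^2\right)=(x+1)^3$ is correct; you also implicitly repair the paper's typographical slips (both $\lambda_4$ and $\lambda_5$ are printed with $+q$, and the second display repeats $\left(\tfrac{p+q}{2}\right)^n$), and your sign-matching is the intended reading. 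What each approach buys: your verification doubles as a consistency check between the two frameworks, but it is logically heavier — it inherits the correctness of the full $14\times 14$ eigen-computation, and, since it consumes \eqref{eq:Celticclosedform}, it would make the paper's subsequent use of \eqref{eq:CKfNT} to re-derive \eqref{eq:Celticclosedform} circular rather than an independent second derivation, which is precisely what the diagrammatic proof preserves at zero cost. One caveat on your ``lighter'' variant: it does not actually avoid the explicit $a_i^{(n)}$, because the structural input that $\left<\overline{G_n}\right>$ involves only $\mu_\pm$ and none of the other eigenvalues $1$, $x+1$, $x+2\pm r$ of $\chi(M,\lambda)$ comes from that very computation; granted that input, the two-term recurrence $f_n=p\,f_{n-1}-(x+1)^3f_{n-2}$ plus your base cases is valid, and your $n=1$ check $\left<\overline{G_1}\right>=x(x+1)^3=(x+1)^2\left<\CK_4^{2}\right>$ is correct.
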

	
	\begin{proof}
		Identity \eqref{eq:NTCK} follows immediately from diagram in Figure~\ref{subfig:Tnclosure}.
	\end{proof}
	
	Theorem~\ref{thm:bracketrelation} gives us the closed form for $\left<\CK^{2n}_4\right>$ based on $\left<\overline{G_n}\right>$:
	\begin{equation}\label{eq:CKfNT}
		\left<\CK^{2n}_4\right> = \dfrac{1}{(x+1)^2}\left<\overline{G_n}\right>.
	\end{equation}
	We have 
	\begingroup
	\allowdisplaybreaks
	\begin{align*}
		\left<\overline{G_n}\right>	& =\dfrac{1}{2 q}x \left(\left(\dfrac{p+q}{2}\right)^{n-1}\left(\dfrac{x^2+4x+4+q}{2}\right) \left(x^3+2 x^2+(q+2) x+2\right)\right.\\
		&\left.-\left(\dfrac{p-q}{2}\right)^{n-1}\left(\dfrac{x^2+4x+4-q}{2}\right) \left(x^3+2 x^2-(q-2) x+2\right)\right)\\ 
		& =\dfrac{1}{2 q}x \left(\left(\dfrac{p+q}{2}\right)^{n-1}\left(\dfrac{x^5}{2}+3 x^4 +(q+7) x^3+(3 q+9) x^2+\left(\dfrac{q^2}{2}+3 q+8\right) x+4+q\right)\right.\\
		&\left.-\left(\dfrac{p-q}{2}\right)^{n-1}\left(\dfrac{x^5}{2}+3 x^4 +(7-q) x^3+(9-3 q) x^2+\left(\frac{q^2}{2}-3 q+8\right)x+4-q\right)\right)\\ 
		&(\mbox{replacing $q^2$ by $x^4 + 4 x^3 + 12 x^2 + 20 x + 12$})\\
		& =\dfrac{1}{2 q}x \left(\left(\dfrac{p+q}{2}\right)^{n-1}\left(x^5+5 x^4+(q+13) x^3+(3 q+19) x^2+(3 q+14) x+4+q\right)\right.\\
		&\left.-\left(\dfrac{p-q}{2}\right)^{n-1}\left(x^5+5 x^4+(13-q) x^3+(19-3 q) x^2+(14-3 q) x+4-q\right)\right)\\ 
		& =\dfrac{1}{2 q}x \left(\left(\dfrac{p+q}{2}\right)^{n-1}(x+1)^3 \left(x^2+2 x+4+q\right)\right.\\
		&\qquad\left. -\left(\dfrac{p-q}{2}\right)^{n-1}(x+1)^3 \left(x^2+2 x+4-q\right)\right)\\
		\left<\overline{G_n}\right> &=\dfrac{1}{2q}x\left(x+1\right)^3\left(\left(x^2+2x+4+q\right)\left(\dfrac{p+q}{2}\right)^{n-1}-\left(x^2+2x+4-q\right)\left(\dfrac{p-q}{2}\right)^{n-1}\right). 
	\end{align*}
	\endgroup
	Applying \eqref{eq:CKfNT}, we obtain the expected result.

	\bigskip
	\small \textbf{2010 Mathematics Subject Classifications}:  57M25; 05A19.
\end{document}